\documentclass[12pt, a4paper]{article}

\usepackage[margin=2cm]{geometry}
\usepackage{graphicx}
\usepackage{color}
\usepackage{epstopdf}
\usepackage{graphicx}
\usepackage{algorithmicx}
\usepackage{booktabs}
\usepackage{amsmath, amsfonts, amssymb, latexsym}
\usepackage{mathtools}
\usepackage{algorithm}
\usepackage{algpseudocode}
\usepackage{amsmath}
\usepackage{xcolor}
\usepackage{epstopdf}
\usepackage{comment}
\usepackage{upref}
\usepackage{hyperref}
\usepackage{fancyvrb}
\usepackage{amsmath,amsfonts,amssymb}
\usepackage{mathtools}
\usepackage{float}
\usepackage{subfig}
\usepackage{commath}
\usepackage{graphicx}
\usepackage{subcaption}
\usepackage{subfig}
\usepackage{placeins}

\newcommand{\What}{\widehat{W}}
\newcommand{\Shat}{\widehat{S}}
\newcommand{\Vhat}{\widehat{V}}

\newcommand{\Lhat}{\widehat{L}}
\newcommand{\Phat}{\widehat{P}}
\newcommand{\Qhat}{\widehat{Q}}
\newcommand{\That}{\widehat{T}}

\newcommand{\E}{\theta}
\newcommand{\F}{\phi}

\def\norm#1{\|#1\|}

\usepackage{amsthm}
\theoremstyle{remark}

\newtheorem{theorem}{Theorem}[section]
\newtheorem{lemma}[theorem]{Lemma}
\newtheorem{remark}[theorem]{Remark}

\begin{document}
\title{Error Analysis and Precision Selection for Mixed-Precision DEIM-CUR Decompositions}
\author{Ioannis Thanasis\thanks{Faculty of Mathematics and Physics, Charles University, \url{thanasis@karlin.mff.cuni.cz}}, Erin Carson\thanks{Faculty of Mathematics and Physics, Charles University, \url{carson@karlin.mff.cuni.cz}\\Both authors are supported by the European Union (ERC, inEXASCALE, 101075632).
Views and opinions expressed are those of the authors only and do not necessarily reflect those of the European Union or the European Research Council.
Neither the European Union nor the granting authority can be held responsible for them.
The second author additionally acknowledges support from the Charles University Research Centre program No. UNCE/24/SCI/005.}}
\date{}
\maketitle

\begin{abstract}
CUR approximation is a popular approach for constructing an interpretable low-rank approximation of a matrix. There are many approaches for constructing the CUR approximation, many of which are based on the SVD. Results in the literature bounding the quality of the approximation usually rely on the assumption that all or part of the SVD is computed exactly. In this work, we relax this assumption and show how the quality of the approximation is affected by an inexact SVD. Moreover, we analyze the effect of finite precision arithmetic on the DEIM-CUR method. Our analysis suggests guidelines for selecting the working precisions for the SVD and the DEIM index selection algorithm. In particular, it suggests that in many cases one can use very low precisions for these computations without significantly impacting the approximation quality. A set of numerical experiments is presented to demonstrate the theoretical results. 
\end{abstract}

\section{Introduction}
\label{sec:intro}

\hspace{5mm}The low-rank approximation of matrices is a fundamental task in scientific computing, data science, and machine learning applications. While the truncated singular value decomposition (SVD) provides the optimal rank-$k$ approximation of a matrix $A$ in the $2$-norm and the Frobenius norm, it has undesirable properties in certain applications, namely, that the resulting factors do not retain properties of $A$ such as sparsity and nonnegativity. 

In this work, we consider the alternative interpretable decompositions, in particular, the CUR decomposition, which approximates
\begin{equation*}
A \approx CUR,
\end{equation*}
where for $A\in\mathbb{R}^{m\times n}$, $C\in\mathbb{R}^{m\times k}$ consists of $k$ columns of $A$, $R\in\mathbb{R}^{k\times n}$ consists of $k$ rows of $A$, and $U \in \mathbb{R}^{k \times k}$ is constructed to make $CUR$ close to $A$. Finding a set of columns and rows to minimize $\Vert A-CUR\Vert$ is an instance of the column subset selection problem, which is NP-Hard. Thus a number of heuristic approaches exist, usually either based on QR or SVD.

Here we consider one approach, the Discrete Empirical Interpolation Method (DEIM), originally developed for model order reduction of nonlinear dynamical systems \cite{chaturantabut2010nonlinear}, which uses a greedy approach based on interpolatory projection to select indices \cite{sorensen2016deim}. \par

Modern computing hardware increasingly supports a variety of numerical precisions, ranging from 4-bit microscaling formats to IEEE double precision (fp64). Lower precision formats offer significant advantages in computational speed, memory bandwidth, and energy efficiency. On today's GPUs, these advantages are nonlinear; for example, on the upcoming NVIDIA Rubin architecture, 8-bit formats have a theoretical peak performance of over 500$\times$ that of 64-bit formats \cite{nvidia_vera_rubin_nvl72_2026}.
It is thus desirable to make use of low precision formats as much as possible, without introducing unacceptable levels of error or loss of stability.

Despite much work in developing reduced and mixed precision approaches in numerical linear algebra, see, e.g., \cite{higham2022mixed} and the references therein, little work has been done on low and mixed precision strategies for interpretable decompositions. The  DEIM-CUR computation consists of the computation of an initial truncated SVD, followed by what is essentially an LU factorization with partial pivoting. This thus raises the question: which parts of the computation can we compute in lower precision while maintaining acceptable accuracy of the resulting CUR approximation? Further, can low precision be combined with a randomized approach to computing the SVD to further reduce the cost? 

Existing error analysis for DEIM-CUR, see, e.g., \cite{sorensen2016deim, drmac2016new}, largely assumes exact arithmetic, or at least orthogonality of certain factors in the SVD or QR factorizations. These do not provide guidance on the precision requirements relative to the desired rank $k$ or target accuracy.

In this paper, we develop a rigorous finite precision error analysis of DEIM-induced CUR decompositions and propose guidelines which can be used to select appropriate precisions. Our main contributions are:
\begin{itemize}
    \item We derive a modified version of the fundamental lemmas and theorems of Sorensen and Embree \cite{sorensen2016deim} which remove the assumption that we have an exact SVD at hand. Our bounds are applicable to general inexact SVD (which could be computed, for example, using a randomized method);
    \item We derive bounds for the resulting amplification factors when the DEIM index selection algorithm itself is computed in finite precision;
    \item Our analysis suggests guidelines for selecting the precisions which should be used for the SVD and the DEIM index selection algorithm. 
\end{itemize}

The remainder of the paper is outlined as follows. In Section \ref{sec:background}, we give an overview of CUR decompositions and the DEIM algorithm for point selection. In Section \ref{sec:inexactSVD}, we investigate the quality of a CUR decomposition constructed using an inexact SVD, meaning that we may have only approximate singular values and we do not necessarily have exactly orthogonal singular vectors. These results hold for any method of point selection, where different methods of point selection will affect the included amplification factors. In Section \ref{sec:fpDEIM}, we consider the DEIM algorithm computed in finite precision and bound the resulting amplification factors, which can then be used within the analysis from Section \ref{sec:inexactSVD}. In Section \ref{sec:specSVD} we make the results of Section \ref{sec:inexactSVD} concrete for two example SVD algorithms: LAPACK's \texttt{GESVD} computed in finite precision and a randomized SVD with oversampling computed in finite precision. We demonstrate our theoretical results in Section \ref{sec:numexp} and summarize in Section \ref{sec:conclusion}.
\section{Background and Preliminaries}
\label{sec:background}

\subsection{CUR Decompositions}

\hspace{5mm}CUR decompositions are low rank matrix decompositions expressed in terms of a small subset of rows and columns of the data matrix. Formally, for a matrix A, let I and J denote row and column indices respectively. Then, $A \approx CUR$, where $C = A(:,J)$ and $R = A(I,:)$. Common choices for the core/intersection matrix $U$ are $U = C^\dagger A R^\dagger$ or $U = A(I,J)^\dagger$. Originating from ``pseudoskeleton" approximations \cite{goreinov1997theory}, various approaches have been proposed to select a suitable set of indices, which can broadly be divided into two categories.\par
Pivoting based methods, like column pivoted QR \cite{golub2013matrix} or LU with complete pivoting \cite{trefethen1997numerical}, are applied to the data matrix or its dominant singular vectors, where the pivots determine the selected row or column indices. Sorensen and Embree show in \cite{sorensen2016deim} that when the DEIM-CUR uses the dominant singular vectors, then the approximation error is bounded by a factor of the optimal approximation that depends on the conditioning of the submatrices of the dominant singular vectors. Although their analysis is stated for the DEIM-CUR method, their results are applicable to a general class of CUR factorizations. Hamm and Huang in \cite{hamm2021perturbations} investigate the effect on the approximation error in the presence of noise in the data for several variants. They give perturbation estimates in terms of the magnitude of the noise matrix and show how the choice of columns and rows affects the quality of the approximation. Park and Nakatsukasa \cite{park2025accuracy} investigate the accuracy and numerical stability of CUR decompositions. CUR decompositions can become unstable, due to the inverse of the nearly singular core matrix, causing an amplification of errors. Their finite precision analysis demonstrates that CUR decompositions can be implemented in a numerically stable manner with theoretical guarantees by selecting additional indices. \par
Sampling based methods select the indices by a random sampling according to a probability distribution based on the data. For example, in the case of leverage score sampling \cite{mahoney2009cur}, the distribution is based on the squared Euclidean norm  of each row of the dominant singular vectors. Such methods lead to probabilistic bounds on $\norm{A-CUR}_F$. Other sampling strategies include uniform sampling \cite{chiu2013sublinear}, volume sampling \cite{cortinovis2020low}\cite{deshpande2006adaptive}, DPP sampling \cite{derezinski2021determinantal} and BSS sampling \cite{boutsidis2014near}. 	\par

\subsection{DEIM Index Selection}
\hspace{5mm}DEIM was originally introduced as a Model Order Reduction (MOR) technique \cite{chaturantabut2010nonlinear} for high-dimensional dynamical systems, often arising from spatial discretizations of nonlinear PDEs. The method combines projection with interpolation in a way that improves the computational efficiency of evaluating the nonlinear term within a Proper Orthogonal Decomposition (POD) with Galerkin projection. The nonlinear term is approximated so that a coefficient matrix can be precomputed, keeping the complexity of its evaluation proportional to a small number of selected spatial indices. \par 
In the case of CUR factorization, the DEIM procedure introduced in \cite{sorensen2016deim} operates on the dominant singular vectors of a matrix $A$. It is applied separately to the left and right singular vectors, here denoted $V$ and $W$, respectively, to select indices I and J, such that $C = A(:,J)$ and $R = A(I,:)$ capture the dominant column and row spaces of the original matrix. Specifically, given the left singular vectors, the method processes those vectors one by one and greedily chooses the interpolation index corresponding to the largest magnitude entry of the interpolation residual vector.  Given the left singular vectors $V_j = [v_1, v_2, ..., v_j]$, the process selects the next index $p_j$ as 
\[
    r_j = v_j - \mathcal{P}_{j-1}v_j, \qquad   |r_j(p_j)| = ||r_j||_\infty,
\]
where $\mathcal{P}_j \equiv V_j(P^TV_j)^{-1}P^T$, $P \equiv I(:,\mathbf{p}_j)$ and $\mathbf{p}_j = [p_1, p_2, ..., p_j]$. Similarly, for the right singular vectors $W_j =[w_1, w_2, ..., w_j]$, we denote $\mathcal{Q}_j \equiv Q(W_j^T Q)^{-1}W_j^T$, $Q \equiv I(:, \mathbf{q}_j)$ and $\mathbf{q}_j = [q_1, q_2, ..., q_j]$. Algorithm \ref{alg:DEIM} describes the process in detail.\par
In \cite{sorensen2016deim} the formulation of the method is accompanied by an error analysis that is applicable to a broad class of CUR factorizations. They bound the approximation error by a factor of the optimal approximation that depends on the conditioning of the submatrices ($P^TV_k$ and  $W_k^TQ$) of the dominant singular vectors. Specifically: 
\begin{equation*}
    \norm{A-CUR}_2 \leq (\eta_p+\eta_q)\sigma_{k+1}(A),
\end{equation*}
where $\eta_p = \norm{(P^TV_k)^{-1}}_2$, $\eta_q = \norm{(W_k^TQ)^{-1}}_2$ and $\sigma_{k+1}(A)$ is the $(k+1)$th singular value of the matrix $A$ (in descending order). Moreover, they propose an incremental QR algorithm for approximating the dominant singular vectors that makes only one pass through the matrix A.  
\begin{algorithm}
    \caption{DEIM point selection algorithm}\label{alg:DEIM}
    \scalebox{1}{
    \begin{minipage}{\linewidth}
    \begin{algorithmic}[1]
    \Procedure{[$p$] = DEIM}{$V$}
        \State $\textit{\% $V$: $m\times k$ matrix ($m \geq k$)}$
        \State $\textit{\% $p$: an integer vector with k distinct entries in $\{1...m\}$}$
        \State $v = V(:,1)$
        \State $[\sim,p_1 ] = \max(|v|)$
        \State $\mathbf{p} = [p_1]$
        \For{ j = 2...k }   
            \State $v = V(:,j)$
            \State $c = V(\mathbf{p},1:j-1)^{-1}v(\mathbf{p})$
            \State $r = v - V(:,1:j-1)c$
            \State $[\sim,p_j] = \max(|r|)$
            \State $\mathbf{p} = [\mathbf{p}; p_j]$
        \EndFor
    \EndProcedure
    \end{algorithmic}
    \end{minipage}
    }
\end{algorithm}

\subsection{Notation}
\hspace{5mm}We let $\Vert \cdot \Vert$ denote the 2-norm and $\Vert \cdot \Vert_F$ denote the Frobenius norm. In general, we use hats ($\hat{\cdot}$) to denote quantities that are computed inexactly. The dagger symbol $\dagger$ denotes the pseudoinverse. In many cases we adopt the gamma notation for rounding error analysis, using the quantities
\[
\gamma_n = \frac{nu}{1-nu},\quad\text{and}\quad \widetilde{\gamma}_n = \frac{cnu}{1-cnu},
\]
assuming $nu<1$ and $cnu<1$, respectively, where $c$ is a small integer constant. When $\gamma$ carries a superscript, this denotes the subscript on the corresponding unit roundoff $u$; here we will use two units roundoff: $u_D$ for the DEIM point selection and $u_S$ for the SVD computation. 
\section{CUR Approximation with Inexact SVD}
\label{sec:inexactSVD}

\hspace{5mm}Let $A = V S W^T$ be the exact full SVD of $A$, and let $A_k = V_k S_k W_k^T$ be the best rank-$k$ approximation of $A$ given by the (exactly computed) truncated SVD of $A$, where $V_k \in \mathbb{R}^{m\times k}, S_k \in \mathbb{R}^{k \times k}$, and $W_k \in \mathbb{R}^{n \times k}$. 

Assume that we have computed a rank-$k$ truncated SVD of an $m\times n$ matrix $A$ inexactly in some way, and that the inexact SVD is given by $\widehat{A}_k = \Vhat_k \Shat_k \What_k^T$. Thus $\Shat_k$ does not contain the exact leading $k$ singular values, and we do not assume that $\Vhat_k$ and $\What_k$ have orthonormal columns. 
We assume that we can write the bound
 \begin{equation}
 \norm{A-\Vhat_k \Shat_k \What_k^T} \leq \E \sigma_{k+1} +\F,
 \label{eq:inexactSVD}
 \end{equation}
for some quantities $\E$ and $\F$. The reason for this format will become clear when we later apply our results to specific inexact SVD computations. Note that for the exact SVD computation, we have $\E=1$ and $\F=0$.

 We consider a CUR factorization that uses row indices $p\in\mathbb{N}^k$ and column indices $q\in\mathbb{N}^k$, and set
\begin{equation}
P=I(:,p)\in\mathbb{R}^{m\times k}, \quad Q=I(:,q)\in\mathbb{R}^{n\times k}.
\label{eq:PQdef}
\end{equation}
We first prove an analog of \cite[Lemma 4.1]{sorensen2016deim} without the assumption that $\Vhat_k^T \Vhat_k = I$. 

\begin{lemma}
Let $\widehat{A}_k = \Vhat_k \Shat_k \What_k^T$ be an inexactly computed rank-$k$ truncated SVD of $A$ satisfying the bound \eqref{eq:inexactSVD}, and let $P$ and $Q$ be defined as in \eqref{eq:PQdef}.
Assume that $P^T\Vhat_k$ is invertible, and let $\mathcal{P}=\Vhat_k(P^T\Vhat_k)^{-1}P^T$ be an interpolatory projector. Then 
\begin{equation*}
\norm{A-\mathcal{P}A} \leq \norm{\Vhat_k} \norm{(P^T\Vhat_k)^{-1}} (\E\sigma_{k+1}+\F).
\end{equation*}

\end{lemma}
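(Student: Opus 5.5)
The plan is to exploit that $\mathcal{P}$ is an oblique projector that reproduces everything in the range of $\Vhat_k$, so that $I-\mathcal{P}$ annihilates $\Vhat_k$. First I would check that $\mathcal{P}\Vhat_k = \Vhat_k(P^T\Vhat_k)^{-1}P^T\Vhat_k = \Vhat_k$, hence $(I-\mathcal{P})\Vhat_k = 0$. As a consequence, $(I-\mathcal{P})\widehat{A}_k = (I-\mathcal{P})\Vhat_k\Shat_k\What_k^T = 0$. This lets us write
\begin{equation*}
A-\mathcal{P}A = (I-\mathcal{P})A = (I-\mathcal{P})(A-\widehat{A}_k),
\end{equation*}
so that $\norm{A-\mathcal{P}A}\le \norm{I-\mathcal{P}}\,\norm{A-\widehat{A}_k}\le \norm{I-\mathcal{P}}(\E\sigma_{k+1}+\F)$ by \eqref{eq:inexactSVD}.

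The remaining task is to bound $\norm{I-\mathcal{P}}$ by $\norm{\Vhat_k}\norm{(P^T\Vhat_k)^{-1}}$. Note that $\mathcal{P}$ is idempotent: $\mathcal{P}^2=\Vhat_k(P^T\Vhat_k)^{-1}P^T\Vhat_k(P^T\Vhat_k)^{-1}P^T=\mathcal{P}$. It is also nontrivial, since $k\ge1$ and we may assume $k<m$ (if $k=m$ then $\mathcal{P}=I$ and the left side is zero). For any idempotent $\mathcal{P}\neq 0,I$ there is the classical identity $\norm{I-\mathcal{P}}=\norm{\mathcal{P}}$ in the 2-norm (Szyld's result, also used in Sorensen--Embree). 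Submultiplicativity then gives
\begin{equation*}
\norm{\mathcal{P}}\le \norm{\Vhat_k}\,\norm{(P^T\Vhat_k)^{-1}}\,\norm{P^T},
\end{equation*}
and $\norm{P^T}=1$ because $P$ consists of distinct columns of the identity. The distinctness of the indices also follows from the invertibility of $P^T\Vhat_k$.

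The only step needing care is the identity $\norm{I-\mathcal{P}}=\norm{\mathcal{P}}$. Orthonormality of $\Vhat_k$ plays no role there: the identity holds for any oblique projector. This is exactly why the factor $\norm{\Vhat_k}$ appears in the bound, where in the exact case it equals $1$. If I wished to avoid citing that identity, I would instead bound $\norm{I-\mathcal{P}}\le 1+\norm{\mathcal{P}}$, but this loses the sharp constant. So I will cite the projector identity, which is the intended route and mirrors the proof of \cite[Lemma 4.1]{sorensen2016deim}.
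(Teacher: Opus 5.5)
Your proof is correct and is essentially the paper's argument: $(I-\mathcal{P})$ annihilates $\Vhat_k$, you use $\norm{I-\mathcal{P}}=\norm{\mathcal{P}}\le\norm{\Vhat_k}\norm{(P^T\Vhat_k)^{-1}}$, and the residual is controlled by $\norm{A-\widehat{A}_k}$. The only difference is that you apply $(I-\mathcal{P})\widehat{A}_k=0$ directly, whereas the paper first inserts the orthogonal projector $I-\Vhat_k\Vhat_k^\dagger$, which gives the marginally sharper intermediate factor $\norm{(I-\Vhat_k\Vhat_k^\dagger)A}$ before bounding it by $\norm{A-\widehat{A}_k}$; the final bound is the same.
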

\begin{proof}
    
 Let $\mathcal{P}=\Vhat_k(P^T\Vhat_k)^{-1}P^T$ be an interpolatory projector. From this we have
\[
\mathcal{P}\Vhat_k = \Vhat_k \rightarrow (I-\mathcal{P})\Vhat_k = 0.
\]
From this,
\begin{align*}
\norm{A-\mathcal{P}A} = \norm{(I-\mathcal{P})A} &= \norm{(I-\mathcal{P})(I-\Vhat_k \Vhat_k^\dagger)A} \\
&\leq \norm{I-\mathcal{P}} \norm{(I-\Vhat_k \Vhat_k^\dagger)A}.
\end{align*}
We can then write, assuming $\mathcal{P}\neq 0$ or $I$,
\[
\norm{I-\mathcal{P}} = \norm{\mathcal{P}} = \norm{\Vhat_k (P^T\Vhat_k)^{-1}P^T} = \norm{\Vhat_k (P^T\Vhat_k)^{-1}} \leq \norm{\Vhat_k}\norm{(P^T \Vhat_k)^{-1}};
\]
note that this differs from the result in \cite[Lemma 4.1]{sorensen2016deim} since we cannot assume $\norm{\Vhat_k}=1$.
Thus altogether we have the bound
\[
\norm{A-\mathcal{P}A} \leq \norm{\Vhat_k} \norm{(P^T\Vhat_k)^{-1}} \norm{(I-\Vhat_k\Vhat_k^\dagger)A}.
\]

We now seek to bound the last term on the right-hand side above. Writing $A = \widehat{A}_k + (A-\widehat{A}_k)$, we have
\begin{equation}
    \norm{(I-\Vhat_k\Vhat_k^\dagger)A} = \norm{(I-\Vhat_k \Vhat_k^\dagger)(A-\widehat{A}_k)} \leq \norm{A-\widehat{A}_k} \leq \E\sigma_{k+1} + \F.
\end{equation}

Substituting this above, we have 
\[
\norm{A-\mathcal{P}A} \leq \norm{\Vhat_k} \norm{(P^T\Vhat_k)^{-1}} (\E\sigma_{k+1}+\F).
\]

\end{proof}

In a completely analogous way, we can consider the right interpolatory projector $\mathcal{Q} = Q(\What_k^TQ)^{-1}\What_k^T$, assuming that $\What^TQ$ is invertible, and can show that 
\begin{equation}
    \Vert A - A\mathcal{Q}\Vert \leq \norm{\What_k}\norm{(\What_k^T Q)^{-1}}(\E\sigma_{k+1}+\F).
\end{equation}

Now, define
\begin{equation}
    \tilde{\eta}_p = \norm{(P^T\Vhat_k)^{-1}}, \qquad 
    \tilde{\eta}_q = \norm{(\What_k^T Q)^{-1}}.
    \label{eq:etas}
\end{equation}
Then 
\begin{align}
    \norm{(I-\mathcal{P})A} & \leq \norm{\Vhat_k}\tilde{\eta}_p(\E\sigma_{k+1}+\F),\\
    \norm{A(I-\mathcal{Q})} &\leq \norm{\What_k}\tilde{\eta}_q (\E\sigma_{k+1}+\F).
\end{align}

The CUR approximation uses $C = AQ$ and $R = P^T A$.
The relevant projection residuals involve the orthogonal projectors
$CC^\dagger$ and $R^\dagger R$.
\begin{lemma}
\label{lem:CR}
Assuming \eqref{eq:inexactSVD} holds, for $C = AQ$ and $R = P^T A$ with $Q$ and $P$ defined in \eqref{eq:PQdef},
\begin{align}
  \norm{(I - CC^\dagger)A} &\leq \norm{\What_k}\,\tilde\eta_q\,(\E\sigma_{k+1}+\F),
  \label{eq:C_res} \\
  \norm{A(I - R^\dagger R)} &\leq \norm{\Vhat_k}\,\tilde\eta_p\,(\E\sigma_{k+1}+\F).
  \label{eq:R_res}
\end{align}
\end{lemma}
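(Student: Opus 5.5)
The argument is a best-approximation step: $CC^\dagger A$ is the orthogonal projection of $A$ onto $\mathrm{range}(C)$, so its residual is no larger than that of any other approximation of $A$ whose columns lie in $\mathrm{range}(C)$. The oblique projection $A\mathcal{Q}$ from the previous lemma is such an approximation, and its residual is already bounded. The row case follows by the symmetric argument.

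For \eqref{eq:C_res}, write $A\mathcal{Q} = AQ(\What_k^TQ)^{-1}\What_k^T = C(\What_k^TQ)^{-1}\What_k^T$. This uses only the assumed invertibility of $\What_k^TQ$. Every column of $A\mathcal{Q}$ therefore lies in $\mathrm{range}(C)$, so $CC^\dagger A\mathcal{Q} = A\mathcal{Q}$ and hence $(I-CC^\dagger)A\mathcal{Q}=0$. It follows that
\[
(I-CC^\dagger)A = (I-CC^\dagger)(A - A\mathcal{Q}) = (I-CC^\dagger)A(I-\mathcal{Q}).
\]
Since $I-CC^\dagger$ is an orthogonal projector, its 2-norm is at most $1$. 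This gives $\norm{(I-CC^\dagger)A}\le\norm{A(I-\mathcal{Q})}$, and the right-interpolatory-projector bound established above yields $\norm{What_k}\tilde\eta_q(\E\sigma_{k+1}+\F)$ in the notation of \eqref{eq:etas}, i.e.\ $\norm{\What_k}\,\tilde\eta_q\,(\E\sigma_{k+1}+\F)$.

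For \eqref{eq:R_res}, the argument mirrors this on the row side. Write $\mathcal{P}A = \Vhat_k(P^T\Vhat_k)^{-1}P^TA = \Vhat_k(P^T\Vhat_k)^{-1}R$. Every row of $\mathcal{P}A$ lies in the row space of $R$, and $R^\dagger R$ is the orthogonal projector onto that row space, so $\mathcal{P}A\,R^\dagger R = \mathcal{P}A$. Hence
\[
A(I-R^\dagger R) = (A-\mathcal{P}A)(I-R^\dagger R) = (I-\mathcal{P})A(I-R^\dagger R).
\]
Bounding $\norm{I-R^\dagger R}\le 1$ and applying the previous lemma gives $\norm{\Vhat_k}\tilde\eta_p(\E\sigma_{k+1}+\F)$.

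No step here is a real obstacle. The only points needing care are the identities $CC^\dagger C = C$ and $RR^\dagger R = R$, which hold for the Moore--Penrose pseudoinverse even when $C$ or $R$ is rank-deficient, and the implicit hypothesis that $P^T\Vhat_k$ and $\What_k^TQ$ are invertible, which is needed for $\mathcal{P}$ and $\mathcal{Q}$ to exist. I would state that hypothesis explicitly, as in the preceding lemma.
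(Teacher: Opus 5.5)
Your proof is correct and follows essentially the same route as the paper, which defers to Lemma~4.2 of Sorensen and Embree. That argument likewise writes $A\mathcal{Q} = C(\What_k^TQ)^{-1}\What_k^T$, deduces $(I-CC^\dagger)A = (I-CC^\dagger)A(I-\mathcal{Q})$ (and symmetrically for $R$ via $\mathcal{P}A = \Vhat_k(P^T\Vhat_k)^{-1}R$), uses $\norm{I-CC^\dagger}\le 1$, and then applies the interpolatory-projector bound from the preceding lemma. Your remark that the invertibility of $P^T\Vhat_k$ and $\What_k^TQ$ should be stated explicitly is fair, since the lemma leaves it implicit in the definition \eqref{eq:etas}.
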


\begin{proof}
The proof is analogous to the proof of Lemma 4.2 in \cite{sorensen2016deim}.
\end{proof}

We can then state the result for the error of the CUR decomposition. 
\begin{theorem}
\label{thm:main}
Let $A \in \mathbb{R}^{m\times n}$, $1 \le k < \min(m,n)$.  Suppose the DEIM index
sets $P$ and $Q$ are computed from $\Vhat_k$ and $\What_k$
satisfying \eqref{eq:inexactSVD}, with amplification factors
$\tilde\eta_p$ and $\tilde\eta_q$ defined in \eqref{eq:etas}.  Let $C = AQ$, $R = P^T A$, and
$U = C^\dagger A R^\dagger$.  Then
\begin{equation}
  \norm{A - CUR}
  \leq
  \bigl(\norm{\What_k}\,\tilde\eta_q + \norm{\Vhat_k}\,\tilde\eta_p\bigr)
  \bigl(\E\sigma_{k+1} + \F\bigr).
  \label{eq:main}
\end{equation} 
\end{theorem}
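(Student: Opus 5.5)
The plan is to follow the argument of Sorensen and Embree for their Theorem 4.1. The inexact SVD enters only through Lemma \ref{lem:CR}. Once the residual bounds \eqref{eq:C_res} and \eqref{eq:R_res} are available, the rest is a purely algebraic splitting of $A-CUR$ using the orthogonal projectors $CC^\dagger$ and $R^\dagger R$.

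\textbf{Step 1 (rewrite $CUR$ via projectors).} With $U = C^\dagger A R^\dagger$ we get
\[
CUR = CC^\dagger A R^\dagger R.
\]

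\textbf{Step 2 (split the error).}
\[
A - CUR = (I-CC^\dagger)A + CC^\dagger A(I-R^\dagger R).
\]
Both $CC^\dagger$ and $R^\dagger R$ are orthogonal projectors, so $\norm{CC^\dagger}\le 1$.

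\textbf{Step 3 (triangle inequality).}
\[
\norm{A-CUR} \le \norm{(I-CC^\dagger)A} + \norm{A(I-R^\dagger R)}.
\]
Substituting \eqref{eq:C_res} and \eqref{eq:R_res} then gives \eqref{eq:main} directly.

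\textbf{Filling in Lemma \ref{lem:CR}.} The lemma's proof is only cited by analogy, so I would record why it holds with the new factors.
\begin{itemize}
\item For the column residual: $C = AQ$, and $A\mathcal{Q} = AQ(\What_k^TQ)^{-1}\What_k^T$ has its range contained in $\mathrm{range}(C)$. Since $(I-CC^\dagger)A$ is the minimal residual over all matrices with range in $\mathrm{range}(C)$, we get
\[
\norm{(I-CC^\dagger)A} \le \norm{(I-CC^\dagger)(A-A\mathcal{Q})} \le \norm{A-A\mathcal{Q}} .
\]
The right-hand side is bounded by $\norm{\What_k}\tilde\eta_q(\E\sigma_{k+1}+\F)$ by the right-projector analogue of the previous lemma.
\item For the row residual: $\mathcal{P}A = \Vhat_k(P^T\Vhat_k)^{-1}R$ has its row space contained in that of $R$. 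Hence
\[
\norm{A(I-R^\dagger R)} \le \norm{(A-\mathcal{P}A)(I-R^\dagger R)} \le \norm{A-\mathcal{P}A},
\]
and the previous lemma bounds this.
\end{itemize}

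\textbf{Where care is needed.} The one delicate point is confirming that $CC^\dagger$ and $R^\dagger R$ remain genuine orthogonal projectors even when $C$ or $R$ is rank-deficient. They are, since pseudoinverse projectors are orthogonal regardless of rank. The only place non-orthonormality of $\Vhat_k$ and $\What_k$ enters is through the factors $\norm{\Vhat_k}$ and $\norm{\What_k}$, which are already absorbed in Lemma \ref{lem:CR}. Beyond this check I expect no real obstacle.
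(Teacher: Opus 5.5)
Your proof is correct and takes the same route the paper does by deferring to Sorensen and Embree's Theorem 4.1. You write $CUR = CC^\dagger A R^\dagger R$, split $A-CUR=(I-CC^\dagger)A + CC^\dagger A(I-R^\dagger R)$, use $\norm{CC^\dagger}\le 1$, and invoke Lemma \ref{lem:CR}; your filled-in proof of that lemma, via $(I-CC^\dagger)A\mathcal{Q}=0$ and $\mathcal{P}A(I-R^\dagger R)=0$, is the analogue of their Lemma 4.2 that the paper cites.
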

\begin{proof}
    The proof is analogous to the proof of Theorem 4.1 in \cite{sorensen2016deim}.
\end{proof}

\begin{remark}
\label{rem:recovery}
When the SVD is computed exactly, we have $\norm{\Vhat_k} = \norm{\What_k} = 1$,
$\tilde\eta_p = \eta_p$, $\tilde\eta_q = \eta_q$, $\E = 1$, $\F = 0$,
and~\eqref{eq:main} reduces to the result of Sorensen and Embree. 
\end{remark}

We now aim to provide bounds on the quantities $\tilde{\eta}_p$ and $\tilde{\eta}_q$ when $P$ and $Q$ are produced by the (exact) DEIM algorithm run using the inexact SVD factors.     

\begin{lemma}\label{lemma_eta_exact_DEIM}
    For the (exact) DEIM selection algorithm derived above and under the inexact SVD assumptions, 
    \begin{equation}
    \begin{split}
        \tilde{\eta}_p &\leq \sqrt{\frac{mk}{3}}\frac{2^k}{\sigma_{\min}(\Vhat_k)}\\ 
        \tilde{\eta}_q &\leq \sqrt{\frac{nk}{3}}\frac{2^k}{\sigma_{\min}(\What_k)}        
    \end{split}
    \end{equation}
\end{lemma}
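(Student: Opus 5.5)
We follow the Sorensen--Embree proof of \cite[Lemma 3.2]{sorensen2016deim}, which gives $\eta_p \le \sqrt{mk/3}\,2^k$ when $V_k$ has orthonormal columns. We reduce to that case by orthonormalizing $\Vhat_k$. Let $\Vhat_k = \widetilde{V}R_V$ be a thin QR factorization, with $\widetilde V^T\widetilde V = I$ and $R_V$ nonsingular. Nonsingularity of $R_V$ follows because $P^T\Vhat_k$ is invertible, so $\Vhat_k$ has full column rank. Then $P^T\Vhat_k = (P^T\widetilde V)R_V$, and therefore
\[
(P^T\Vhat_k)^{-1} = R_V^{-1}(P^T\widetilde V)^{-1},\qquad
\tilde\eta_p \le \norm{R_V^{-1}}\,\norm{(P^T\widetilde V)^{-1}} = \frac{\norm{(P^T\widetilde V)^{-1}}}{\sigma_{\min}(\Vhat_k)}.
\]
This produces the factor $1/\sigma_{\min}(\Vhat_k)$. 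It remains to show $\norm{(P^T\widetilde V)^{-1}} \le \sqrt{mk/3}\,2^k$.

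The key observation is that the DEIM selection is invariant under right multiplication by an upper triangular nonsingular matrix. The residual at step $j$ is $r_j = (I-\mathcal P_{j-1})v_j$. The range of the first $j-1$ columns is unchanged by upper triangular recombination, and so is the interpolatory projector $\mathcal P_{j-1}$, since it depends only on that range and on $\mathbf p_{j-1}$. Column $j$ of $\widetilde V = \Vhat_k R_V^{-1}$ equals a nonzero multiple, $(R_V^{-1})_{jj}$, of $\hat v_j$ plus a combination of $\hat v_1,\dots,\hat v_{j-1}$, and the projector annihilates that combination. Hence the residual computed from $\widetilde V$ is a nonzero scalar multiple of the residual computed from $\Vhat_k$. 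The argmax of its absolute entries is therefore the same, up to the same tie-breaking. Consequently, running exact DEIM on $\Vhat_k$ selects exactly the indices $\mathbf p$ that DEIM would select on the orthonormal $\widetilde V$.

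Having established this, we apply the Sorensen--Embree bound directly to $\widetilde V$. Their argument writes $(P^T\widetilde V)^{-1}$ through the LU-type recurrence of DEIM. It bounds $\norm{(I-\mathcal P_j)}$ growth by factors of $2$ per step (via $|r_j(p_j)|\ge \norm{r_j}/\sqrt m$ and bounded multipliers), and then sums a geometric-type series to get $\sqrt{mk/3}\,2^k$. It uses only orthonormality of the columns, which $\widetilde V$ has. This yields the bound for $\tilde\eta_p$. The bound for $\tilde\eta_q$ is identical, applied to $\What_k$ with $n$ in place of $m$ and a QR factorization of $\What_k$, noting that $\What_k^TQ = R_W^T(\widetilde W^TQ)$.

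The main obstacle is the invariance step. One must check carefully that exact DEIM's index choices, including the first step, where $r_1 = \hat v_1$ is scaled by $(R_V^{-1})_{11}$, depend only on the nested column spaces and on scaling. If the tie-breaking in $\max$ or the precise form of the cited lemma creates difficulties, the fallback is to redo the Sorensen--Embree recurrence directly for nonorthonormal $\Vhat_k$. In that version one tracks $\norm{\hat v_j}\le\norm{\Vhat_k}$ and the residual lower bound through $\sigma_{\min}(\Vhat_k)$.
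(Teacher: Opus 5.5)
Your proof is correct, but it is organized differently from the paper's.

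The paper never orthonormalizes. It takes the partial-pivoting factorization $\Vhat_k = LT$ directly (its pivots are the DEIM indices) and bounds $\tilde\eta_p \le \norm{T^{-1}}\,\norm{L_1^{-1}}$. It keeps $\norm{L_1^{-1}} \le 2^k/\sqrt{3}$ unchanged, since that bound uses only $|L_{ij}|\le 1$. It then repairs the single place where Sorensen--Embree use orthonormality, $\norm{T^{-1}} \le \norm{L}_F \le \sqrt{mk}$, by writing $T^{-1} = \Vhat_k^\dagger L$, so that $\norm{T^{-1}} \le \sqrt{mk}/\sigma_{\min}(\Vhat_k)$.

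You instead factor $\Vhat_k = \widetilde V R_V$ and show that exact DEIM is invariant under right multiplication by a nonsingular upper triangular matrix. This step is sound: the residuals are exact nonzero multiples of each other, so even the sets of maximizers coincide and tie-breaking is not an issue. You then apply the orthonormal bound to $\widetilde V$ as a black box. That bound is \cite[Lemma 4.4]{sorensen2016deim}, and it is proved by the LU argument just described, not by a projector-growth recurrence. This does not affect your proof, since you only use its conclusion.

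The two routes give the same estimate. Indeed $\widetilde V = L\,(TR_V^{-1})$ has the same pivoted unit lower triangular factor as $\Vhat_k$, your invariance step is exactly this observation, and $\Vhat_k^\dagger = R_V^{-1}\widetilde V^T$.

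Your version buys modularity and a clean interpretation. The whole cost of non-orthonormality is isolated in $\norm{R_V^{-1}} = 1/\sigma_{\min}(\Vhat_k)$, and any bound for DEIM on orthonormal bases transfers immediately.

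The paper's version buys robustness. It needs only a factorization with bounded multipliers, not that the indices selected from $\Vhat_k$ coincide with those DEIM would select from an orthonormalized basis. That coincidence need not survive when DEIM is run in floating point. This is why the same computation, with $\That^{-1} = (\Vhat_k+\Delta V)^\dagger\Lhat$, can be reused directly in Lemma~\ref{lem:eta-bound-fp}.

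Your fallback paragraph is not needed.
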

\begin{proof}
    Let us prove the result for $\tilde{\eta}_p$; the result for $\tilde{\eta}_q$ follows similarly. Assume $\Vhat_k$ is the approximation of the left singular space coming from the inexact SVD. Then we cannot assume orthonormality. Let $p = DEIM(\Vhat_k)$ denote the row index vector derived from the DEIM selection scheme described above. Let $P = I(:,p)$ so that $P^T\Vhat_k = \Vhat_k(p,:)$. The DEIM selection is precisely the index selection of LU decomposition with partial pivoting, so one can write (as in the proof of Lemma 4.4 from \cite{sorensen2016deim}): 
    \begin{equation*}
        \Vhat_k = LT
    \end{equation*}
    Let $L_1 \equiv L(1:k,1:k)$. Then $\Vhat_k(p,:) = L_1T$ and thus:
    \begin{equation*}
        \tilde{\eta}_p = \norm{(P^T\Vhat_k)^{-1}} = \norm{(L_1T)^{-1}} \leq \norm{T^{-1}}\norm{L_1^{-1}}
    \end{equation*}
    So we seek to bound $\norm{T^{-1}}$ and $\norm{L_1^{-1}}$. To bound $\norm{T^{-1}}$, let $y \in \mathcal{R}^k$ be a unit vector such that $\norm{T^{-1}y} = \norm{T^{-1}}$. Then
    \begin{equation*}
    \begin{split}
        \norm{T^{-1}} &= \norm{T^{-1}y} \\
        &= \norm{\Vhat_k^\dagger \Vhat_k  T^{-1}y}\\
        &\leq \norm{\Vhat_k^\dagger} \norm{\Vhat_k  T^{-1}y}\\
        &\leq \norm{\hat{V}_k^\dagger} \norm{Ly}\\
        &\leq \frac{\norm{Ly}}{\sigma_{\min}(\Vhat_k)}\\
        &\leq \frac{\sqrt{m k}}{\sigma_{\min}(\Vhat_k)}
    \end{split}
    \end{equation*}
    The rest of the proof is analogous to the proof of Lemma 4.4 from \cite{sorensen2016deim} for bounding $\norm{Ly}$ and $\norm{L_1^{-1}}$. 
\end{proof}
\section{DEIM in Finite Precision}
\label{sec:fpDEIM}

Section \ref{sec:inexactSVD} bounds $\tilde\eta_p,\tilde\eta_q$ under the assumption that the
index sets $p,q$ are those produced by the DEIM algorithm applied
exactly to $\Vhat_k,\What_k$. We now ask directly: if DEIM is executed in
floating-point arithmetic and produces an index set $\Phat$ using the matrix
$\Vhat_k$ (and index set $\Qhat$ using the matrix $\What_k$), how do the resulting amplification factors

\begin{equation}
  \widehat\eta_p := \norm{(\Phat^T\Vhat_k)^{-1}}, \qquad \widehat\eta_q := \norm{(\What_k^T\Qhat)^{-1}}
  \label{eq:eta-hat}
\end{equation}
behave as a function of the working precision $u_{D}$? 

Here we assume that the DEIM selection is implemented through Gaussian elimination
with partial pivoting (GEPP) applied to $\Vhat_k$, so when DEIM is executed
in floating-point arithmetic, the pivot sequence can be taken as the pivot sequence of a
finite precision LU factorization of $\Vhat_k$. Similarly as before, let $\Phat= I(:,\hat{p})$ so that $\Phat^T\Vhat_k = \Vhat_k(\hat{p},:)$. Standard backward error analysis
for GEPP \cite{higham2002accuracy} gives computed factors $\Lhat,\That$ satisfying
\begin{equation}
  \Vhat_k + \Delta V = \Lhat\,\That, \qquad \norm{\Delta V} \le \gamma^D_k\,\rho_k\,\norm{\Vhat_k},
  \label{eq:finite-lu-backward}
\end{equation}
where 
 $\rho_k = \|\,  |\Lhat| |\That|\,\|/\|\Vhat_k \|$ , and where the row permutation recording the pivot choices
made by $\Lhat$ is precisely the finite precision DEIM index set $\Phat$. We denote $\Phat^T \Delta V = \Delta V(\hat{p},:) \equiv \Delta V_1$.

\begin{lemma}
\label{lem:eta-bound-fp}
Let $\hat p=\mathrm{DEIM}(\Vhat_k)$ be the index set produced when the DEIM index selection is implemented through GEPP applied to $\Vhat_k$ in floating point arithmetic with unit roundoff $u_{D}$, with selection matrix $\Phat$ and  
$\rho_k$ defined above. If 
\begin{equation}
    ku_D < 1 \quad\text{and}\quad \left(1+ 2^k \sqrt{\frac{mk}{3}} \right)\gamma^D_k\rho_k\|\Vhat_k\| < \sigma_{min}(\Vhat_k),
    \label{eq:assumptions}
\end{equation}
then $\Phat^T \Vhat_k$ is nonsingular and
\begin{equation}
  \hat\eta_p \leq \frac{2^k \sqrt{\frac{mk}{3}}}{\sigma_{min}(\Vhat_k)- \left(1+2^k\sqrt{\frac{mk}{3}} \right) \gamma^D_k \rho_k \|\Vhat_k\|}.
  \label{eq:eta-bound-fp}
\end{equation}
The analogous bound holds for $\hat\eta_q$ with $\What_k,n$ in place of $\Vhat_k,m$, respectively.
\end{lemma}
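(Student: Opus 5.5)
The plan is to treat the computed LU factors as an \emph{exact} partial-pivoting LU factorization of the perturbed matrix $\Vhat_k+\Delta V$. We then reuse the argument of Lemma~\ref{lemma_eta_exact_DEIM} for that matrix, and finally transfer the resulting bound back to $\Phat^T\Vhat_k$ by a perturbation argument.

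By \eqref{eq:finite-lu-backward} we have $\Vhat_k+\Delta V=\Lhat\That$, and the row ordering encoded in $\Lhat$ is exactly $\hat p$. Hence
\[
\Phat^T(\Vhat_k+\Delta V)=\Phat^T\Vhat_k+\Delta V_1=\Lhat_1\That,
\qquad \Lhat_1\equiv \Lhat(1:k,1:k).
\]
Since the pivots in GEPP are chosen as the largest-magnitude entry, $\Lhat$ is unit lower trapezoidal with $|\Lhat_{ij}|\le 1$. This is the property used in Lemma 4.4 of \cite{sorensen2016deim}. Strictly, it holds exactly for the computed multipliers when pivoting is by magnitude, up to $(1+u_D)$ factors, which I would absorb or ignore.

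The Sorensen--Embree argument then gives $\norm{\Lhat_1^{-1}}\le 2^k/\sqrt{3}$ (roughly) and $\norm{\Lhat y}\le\sqrt{mk}$. For $\That$, we repeat the chain in the proof of Lemma~\ref{lemma_eta_exact_DEIM}, with $\Vhat_k+\Delta V$ in place of $\Vhat_k$:
\[
\norm{\That^{-1}}\le \frac{\sqrt{mk}}{\sigma_{\min}(\Vhat_k+\Delta V)}\le\frac{\sqrt{mk}}{\sigma_{\min}(\Vhat_k)-\norm{\Delta V}}.
\]
This step uses Weyl's inequality and the hypothesis \eqref{eq:assumptions}, which guarantees that the denominator is positive. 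Combining the two bounds yields
\[
\norm{(\Lhat_1\That)^{-1}}\le \frac{\beta}{\sigma_{\min}(\Vhat_k)-\gamma_k^D\rho_k\norm{\Vhat_k}},\qquad \beta\equiv 2^k\sqrt{\tfrac{mk}{3}}.
\]

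Next we remove $\Delta V_1$. Write $\Phat^T\Vhat_k=\Lhat_1\That\,(I-(\Lhat_1\That)^{-1}\Delta V_1)$. Using $\norm{\Delta V_1}\le\norm{\Delta V}\le\gamma_k^D\rho_k\norm{\Vhat_k}=:\delta$ and setting $s=\sigma_{\min}(\Vhat_k)$, the Neumann/Banach lemma gives nonsingularity provided $\beta\delta/(s-\delta)<1$, that is, $(1+\beta)\delta<s$. This is exactly the second assumption in \eqref{eq:assumptions}. The lemma then gives
\[
\hat\eta_p\le\frac{\beta/(s-\delta)}{1-\beta\delta/(s-\delta)}=\frac{\beta}{s-(1+\beta)\delta},
\]
which is \eqref{eq:eta-bound-fp}. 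The condition $ku_D<1$ just makes $\gamma_k^D$ well defined. The $\hat\eta_q$ case is identical, applied to $\What_k$.

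The main obstacle I expect is justifying that the computed $\Lhat$ inherits the bounded-multiplier structure and that the exact argument of Lemma 4.4 transfers verbatim, including the bound $\norm{\Lhat_1^{-1}}\le 2^k/\sqrt3$ via the recursive growth of the inverse. One also has to check that GEPP in floating point selects the same pivots as floating-point DEIM. The paper takes the latter as an assumption, so I would do the same.
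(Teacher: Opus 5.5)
Your proposal is correct and follows the paper's proof essentially step for step: you read $\Lhat\That$ as an exact LU factorization of $\Vhat_k+\Delta V$, bound $\norm{\That^{-1}}$ through $(\Vhat_k+\Delta V)^\dagger\Lhat$ and Weyl's inequality, invoke the Sorensen--Embree bound on $\norm{\Lhat_1^{-1}}$, and then remove $\Delta V_1$; your Banach-lemma step is an equivalent form of the paper's second use of Weyl, $\sigma_{\min}(\Phat^T\Vhat_k)\ge\sigma_{\min}(\Lhat_1\That)-\norm{\Delta V_1}$, and gives the identical bound. Your worry about $(1+u_D)$ factors is unnecessary: rounding is monotone and $\pm 1$ are representable, so the computed GEPP multipliers satisfy $|\hat l_{ij}|\le 1$ exactly, and the Sorensen--Embree bounds on $\norm{\Lhat}$ and $\norm{\Lhat_1^{-1}}$ therefore apply verbatim.
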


\begin{proof}

From \eqref{eq:finite-lu-backward}, we have
\[
\Phat^T\Vhat_k = \Lhat{L}_1 \That - \Delta V_1, \quad \|\Delta V_1 \| \leq \gamma^D_k \rho_k \|\Vhat_k\|.
\]
By Weyl's inequality and the assumption in \eqref{eq:assumptions}, 
\[
\sigma_{min}(\Vhat_k + \Delta V) \geq \sigma_{min}(\Vhat_k)-\gamma^D_k \rho_k \|\Vhat_k\| > 0.
\]
Then $\Vhat_k + \Delta V$ has full column rank and $\That$ is nonsingular. 

Since $\That^{-1} = (\Vhat_k + \Delta V)^\dagger \Lhat$, the bound $\|\Lhat\|\leq \sqrt{mk}$ gives
\[
\| \That^{-1}\|\leq \frac{\sqrt{mk}}{\sigma_{min}(\Vhat_k-\gamma^D_k \rho_k \|\Vhat_k \|)}.
\]
Combining this with $\|\Lhat_1^{-1}\|\leq 2^k/\sqrt{3}$ gives
\[
\| (\Lhat_1 \That)^{-1}\| \leq \frac{2^k \sqrt{\frac{mk}{3}}}{\sigma_{min}(\Vhat_k)-\gamma^D_k \rho_k \|\Vhat_k\|}.
\]
Applying Weyl's inequality again gives
\begin{align*}
\sigma_{min}(\Phat^T\Vhat_k) &\geq \sigma_{min}(\Lhat_1 \That) - \|\Delta V_1 \| \\
&\geq \frac{\sigma_{min}(\Vhat_k)-\gamma^D_k \rho_k \|\Vhat_k\|}{2^k \sqrt{\frac{mk}{3}}}-\gamma^D_k\rho_k\|\Vhat_k\|\\
&= \frac{\sigma_{min}(\Vhat_k) - \left(1+2^k\sqrt{\frac{mk}{3}}\right)\gamma^D_k \rho_k \|\Vhat_k\|}{2^k \sqrt{\frac{mk}{3}}},
\end{align*}
which is positive by \eqref{eq:assumptions} and which gives the bound \eqref{eq:eta-bound-fp}.

\end{proof}

\begin{remark}
We can use \eqref{eq:eta-bound-fp} to give guidance on how to choose $u_{D}$. 
We can say that the finite precision bound remains close to the exact arithmetic bound whenever
\[
\left(1+ 2^k\sqrt{\frac{mk}{3}} \right) \frac{\gamma^D_k \rho_k \|\Vhat_k\|}{\sigma_{min}(\Vhat_k)} \ll 1.
\]
Note that here we can only say that the two worst case bounds will be of similar magnitude; we can not necessarily say that the finite precision algorithm produces an amplification factor close to that obtained by the exact algorithm. Assuming $ku_D \ll 1$, we have $\gamma^D_k \approx ku_D$. Unless we have computed a very bad inexact SVD, we still expect $\Vert \Vhat_k\Vert$ and $\sigma_{\min}(\Vhat_k)$ to be close to 1 (we will make this more concrete in the following section). Then $\|\Vhat_k\|/\sigma_{min}(\Vhat_k)\approx 1$, and the above condition becomes 
\begin{equation}
k u_{D} \rho_k \left( 1+ 2^k \sqrt{\frac{mk}{3}} \right) \ll 1.
\label{eq:uDEIM}
\end{equation}
Unless we have computed a very bad inexact SVD, we still expect $\Vert \Vhat_k\Vert$ and $\sigma_{\min}(\Vhat_k)$ to be close to 1 (we will make this more concrete in the following section). This means that we only need $u_{D} < 1/(k \rho_k)$. While we can not know $\rho_k$ ahead of time, it is expected that in most practical cases, this factor will be moderate. This means that we can get away with using potentially \emph{very} low precision in the DEIM selection algorithm.

If we also want to guarantee that the amplification factors $\widehat{\eta}_p$ do not grow significantly, we may require that, e.g., 
\[
\gamma^D_k \rho_k \|\Vhat_k\| \sqrt{\frac{mk}{3}}\;\frac{2^k}{\sigma_{\min}(\Vhat_k) - \gamma^D_k\rho_k\norm{\Vhat_k}} \leq 1.
\]
On the left-hand side above, we have approximately $k \rho_k u_{D}$ multiplied by (approximately) the amplification factor for exact DEIM. This says that if we expect a very large amplification factor due to the DEIM algorithm itself, then we must use higher precision. Unfortunately, we can not know the amplification factor ahead of time. Experimentally, we have found that the amplification factors are usually moderate, which echoes the insight of \cite{sorensen2016deim} where the authors comment that they never observed exponential growth over very extensive testing. This suggests that low precision is in many cases suitable for DEIM. 

\end{remark}
\section{Application to Specific SVD Algorithms}
\label{sec:specSVD}

We now discuss two concrete, inexact SVD algorithms that lead to different $(\theta,\phi)$ pairs. The first is the \texttt{GESVD} routine in LAPACK \cite{anderson1999lapack}, and the second is the randomized SVD \cite{halko2011finding}, which has been analyzed in finite precision in \cite{connolly2022randomized}.

\subsection{LAPACK's \texttt{GESVD}}
\hspace{5mm}For \texttt{GESVD} in LAPACK \cite{anderson1999lapack}, we have that the computed result satisfies the backward error result
\[
A+E = (\Vhat+\Delta V) \Shat (\What+\Delta W)^T \equiv \widetilde{V}\Shat \widetilde{W}^T,
\]
with $\Vert E\Vert \leq p(m,n)u_S\Vert A\Vert$, where $\widetilde{V} = \Vhat + \Delta V$ and $\widetilde{W}=\What + \Delta W$ are orthogonal, with $\Vert \Delta V\Vert, \Vert \Delta W \Vert \leq p(m,n)u_S$, and where $p(m,n)$ is a modestly growing function of $m$ and $n$. The right-hand side of the above expression gives the exact SVD for $A+E$. Note that here $\Delta V$ has a different definition than the previous section. 

Let $\widehat{A}_k = \Vhat_k \Shat_k \What_k^T$ and $\widetilde{A}_k = \widetilde{V}_k \Shat_k \widetilde{W}_k^T = (A+E)_k$.
Writing
\[
A-\Vhat_k\Shat_k\What_k^T = A - \widehat{A}_k = (A - (A+E)) + ((A+E)-\widetilde{A}_k) + (\widetilde{A}_k - \widehat{A}_k),
\]
we have
\begin{align*}
\Vert A- \Vhat_k\Shat_k\What_k^T\Vert &\leq \Vert E \Vert +  \hat\sigma_{k+1} + \|\widetilde{A}_k - \widehat{A}_k\| \\
&\leq \sigma_{k+1} + 2p(m,n)u_S\Vert A \Vert + \|\widetilde{A}_k - \widehat{A}_k\| . 
\end{align*}
Ignoring higher order terms in $u_S$, we can bound 
\[
\|\widetilde{A}_k - \widehat{A}_k \| \leq \|\Delta V_k \Shat_k \widetilde{W}_k^T\| + \|\widetilde{V}_k \Shat_k \Delta W_k^T\| \leq 2p(m,n)u_S \|A\|.
\]
Thus together, we have
\[
\Vert A- \Vhat_k\Shat_k\What_k^T\Vert \leq \sigma_{k+1} + 4p(m,n)u_S\|A\|,
\]
and thus for LAPACK's \texttt{GESVD} function, we have
\begin{equation}
    \theta = 1, \qquad \phi = 4p(m,n)u_S\Vert A \Vert.
    \label{eq:thetaphigesvd}
\end{equation}

Further, using the results in \cite{anderson1999lapack}, we know that (to first order),
\begin{equation}
\sigma_{min}(\Vhat_k) \geq 1 - \frac{1}{2}p(m,n)u_S+O(u_S)^2, \qquad \Vert \Vhat_k\Vert \leq 1 + \frac{1}{2}p(m,n)u_S+O(u_S)^2,
\label{eq:Vbounds}
\end{equation}
and analogously for $\What_k$.

Combining \eqref{eq:main} with \eqref{eq:thetaphigesvd} and \eqref{eq:Vbounds}, we can say that if we choose
\begin{equation}
u_S \ll \frac{\sigma_{k+1}}{4p(m,n)\sigma_1} = \frac{\Vert A-A_k\Vert}{4p(m,n)\Vert A\Vert},
\label{eq:usvdcond1}
\end{equation}
then it is likely that we will not see significant effects of using low precision for the SVD computation, assuming also that $(1/2)p(m,n)u_S\ll 1$. Considering the right-hand side above, it is clear that the coarser the desired approximation (the smaller the $k$, or the closer $\sigma_{k+1}$ is to $\sigma_1$), the lower precision one can use without detriment.

\subsection{Randomized SVD}

\hspace{5mm}We now derive the constants $\theta$ and $\phi$ for the randomized SVD algorithm of
Connolly, Higham, and Pranesh~\cite{connolly2022randomized} (Algorithm 2.2, built on the rangefinder
of Halko, Martinsson, and Tropp~\cite{halko2011finding}).

By Theorem~2.5 of \cite{connolly2022randomized}, if we execute Algorithm 2.2 in \cite{connolly2022randomized} in precision $u^S$ except with the small SVD on line 2 of Algorithm 1.2 computed exactly, we have
\begin{equation}
\|A-\Vhat_k \Shat_k \What_k^{T}\|_F
\le \|A-QQ^{T}A\|_F
+\Big((1+k^{1/2})\widetilde{\gamma}^S_{mk}+k^{1/2}\gamma^S_n(1+\widetilde{\gamma}^S_{mn})+k^{1/2}(\widetilde{\gamma}^S_{mn})^2\Big)\|A\|_F, 
\label{eq:errrsvd}
\end{equation}
where $Q\in\mathbb{R}^{m\times k}$.

For a Gaussian test matrix $\Omega \in \mathbb{R}^{n \times (k+s)}$ with oversampling
parameter $s$, and free parameters $\tau, \alpha>1$, Theorem~10.7 of \cite{halko2011finding}
(as invoked by \cite{connolly2022randomized}) gives
\begin{equation}
\|A-QQ^{T}A\|_F
\le \left(1+\tau\sqrt{\frac{3k}{s+1}}\right)\left(\sum_{j>k}\sigma_j^2\right)^{1/2}
+ \alpha\tau\, \frac{e\sqrt{k+s}}{s+1}\,\sigma_{k+1}, 
\label{eq:AQQTA}
\end{equation}
with probability at least $1-\big(2\tau^{-s}+e^{-\alpha^2/2}\big)$.

Substituting \eqref{eq:AQQTA} into \eqref{eq:errrsvd},
\begin{align*}
\|A-\Vhat_k \Shat_k \What_k^{T}\|
&\le \|A-\Vhat_k \Shat_k \What_k^{T}\|_F\\
&\le \alpha\tau\,\frac{e\sqrt{k+s}}{s+1}\,\sigma_{k+1}\\
&\phantom{\leq}+\left(1+\tau\sqrt{\frac{3k}{s+1}}\right)\left(\sum_{j>k}\sigma_j^2\right)^{1/2}
\\
&\phantom{\leq}+\Big((1+k^{1/2})\widetilde{\gamma}^S_{mk}+k^{1/2}\gamma^S_n(1+\widetilde{\gamma}^S_{mn})+k^{1/2}(\widetilde{\gamma}^S_{mn})^2\Big)\|A\|_F.
\end{align*} 
We therefore have
\begin{equation*}
\begin{split}
    \theta &= \alpha\tau\,\frac{e\sqrt{k+s}}{s+1}, \\
    \phi  &= \left(1+\tau\sqrt{\frac{3k}{s+1}}\right)\left(\sum_{j>k}\sigma_j^2\right)^{1/2} + \Big((1+k^{1/2})\widetilde{\gamma}^S_{mk}+k^{1/2}\gamma^S_n(1+\widetilde{\gamma}^S_{mn})+k^{1/2}(\widetilde{\gamma}^S_{mn})^2\Big)\|A\|_F.
\end{split}    
\end{equation*}

First, we can notice that the $\theta$ quantity does not depend on the finite precision error, but only on the error from the randomized low rank approximation. While the $\phi$ term involves more parameters and the bounds can only be stated in a probabilistic sense, we still have the same general conclusion as in the LAPACK case (although now stated in a different norm). Ignoring dimensional constants for the sake of reasoning, we can say from the definition of $\phi$ above that the rounding error is likely to remain negligible as long as we choose 
\begin{equation}
u_S \ll \Vert A-A_k\Vert_F/\Vert A\Vert_F,
    \label{eq:usvdcond2}
\end{equation}
assuming that we have chosen our parameters so that $\tau \sqrt{3k/(s+1)}$ remains small. 
In other words, again we have that the coarser the desired low rank approximation, the lower the precision we can use. 

In both the randomized and the deterministic SVD cases, if the user has some estimate of the relative approximation error they anticipate, this can be used to select the precision used in the SVD computation.

We note that, as stated above, the analysis in \cite{connolly2022randomized} assumes that the SVD of the small matrix is computed exactly during the randomized SVD algorithm; for our purposes we can assume that higher (say, double) precision could be used for this step as it is relatively inexpensive. Further, the (stable) Householder QR algorithm is used for computing the matrix $Q$ from the rangefinder step. We can therefore assume that the computed $\Vhat_k$ and $\What_k$ satisfy bounds similar to those in \eqref{eq:Vbounds}.

\section{Numerical Experiments}
\label{sec:numexp}

In this section we reconstruct Examples 1 and 3 from \cite{sorensen2016deim}. We use different algorithms and precisions to compute the SVD, and in all scenarios, we run DEIM in fp64, fp32, fp16 and q52 (quarter precision with 5 exponent bits and 2 explicit significand bits); see Table \ref{tab:precs} for properties of these precisions. We emulate fp16 and q52  arithmetic using the chop function for MATLAB \cite{higham2019simulating} and we always compute the core matrix $U$ as $\texttt{C}\backslash \texttt{A} / \texttt{R}$ in fp64. We plot the error $\Vert A-\widehat{C}U\widehat{R}\Vert_2$ for each DEIM precision and the best low-rank approximation error given by $\sigma_{k+1}$ for comparison versus the rank $k$. For the same settings we also plot the values of $\widehat{\eta}_p$ versus $k$. MATLAB code for reproducing all plots in this section can be found at \url{https://github.com/thanasisGiannis/mp_DEIM_CUR}. \par 

  \begin{table}
    \centering
    \begin{tabular}{lccll}
        \toprule
        & Signif. & Exp. & Range  ($f_\mathrm{max}/f_\mathrm{min}$)    & Unit roundoff $u$ \\
        & bits &  bits   \\
        \midrule
        fp64          & $52$  & $11$ & $2^{2046}\approx10^{616}$  & $2^{-53} \approx1\times10^{-16}$ \\
        fp32         & $23$  & $8$  & $2^{254}\approx10^{76}$   & $2^{-24} \approx6\times10^{-8}$ \\
        fp16        & $10$  & $5$  & $2^{30}\approx10^{9}$    & $2^{-11} \approx5\times10^{-4}$ \\
        fp8 (q52)  & $2$   & $5$  & $2^{30}\approx10^{9}$    & $2^{-3}  \approx1\times10^{-1}$ \\
        \bottomrule
    \end{tabular}
   \caption{Properties of precisions used in experiments. \label{tab:precs}}
    \end{table}

\subsection{Sparse Non-Negative Random Matrix}\label{section:exp1}
\hspace{5mm}We slightly modify a test case (Example 1) from \cite{sorensen2016deim}. Here we construct a sparse, nonnegative matrix $A$ of size $3000 \times 300$ of the form
\[
A = \sum_{j=1}^{10}\frac{2}{j}x_j y_j^T + \sum_{j=11}^{300} \frac{1}{j} x_j y_j^T,
\]
where $x_j \in\mathbb{R}^{3000}$ and $y_j\in\mathbb{R}^{300}$ are sparse vectors with random nonnegative entries computed via the MATLAB command $x_j = \texttt{sprand}(3000,1,0.025)$ and $y_j = \texttt{sprand}(300,1,0.025)$.

We test four different scenarios for the input SVD to DEIM: (1) LAPACK's SVD routine in fp64, (2) LAPACK's SVD routine in fp32, (3) a randomized SVD with oversampling parameter $p=10$ in fp64, and (4) a randomized SVD with oversampling parameter $p=10$ in fp32. For each scenario, we run DEIM in fp64, fp32, fp16, and q52 (quarter precision with 5 exponent bits and 2 explicit significand bits), for $k=1:50$. In Figure \ref{fig:ex1deim}, we can see that there is no discernible difference between the results produced by DEIM in fp64, fp32, and fp16. The error when $u_D =$ q52 varies only very slightly. There is further very little difference between the behavior of the different SVD computation scenarios. As predicted by the theory, we still expect to end up with an almost orthogonal matrix for $\Vhat_k$, even when computed in lower precision and/or using a randomized SVD. 

In Figure \ref{fig:ex1eta} we plot the value of $\widehat{\eta}_p$. Again we can see that there is no discernible difference between the values of $\widehat{\eta}_p$ for $u_D=$ fp64, fp32, or fp16. It is clear that the values of $\widehat{\eta}_p$ differ when $u_D=$ q52, but the difference is at most around a factor of 2. Comparing across plots, we see that there is a slight difference between the results produced by LAPACK's SVD and the randomized SVD, but in both cases, there is no noticeable difference between fp64 and fp32.

\begin{figure}[h]
    \centering
    \includegraphics[trim=1.5cm 6cm 1.5cm 6cm, clip, width=0.45\linewidth]{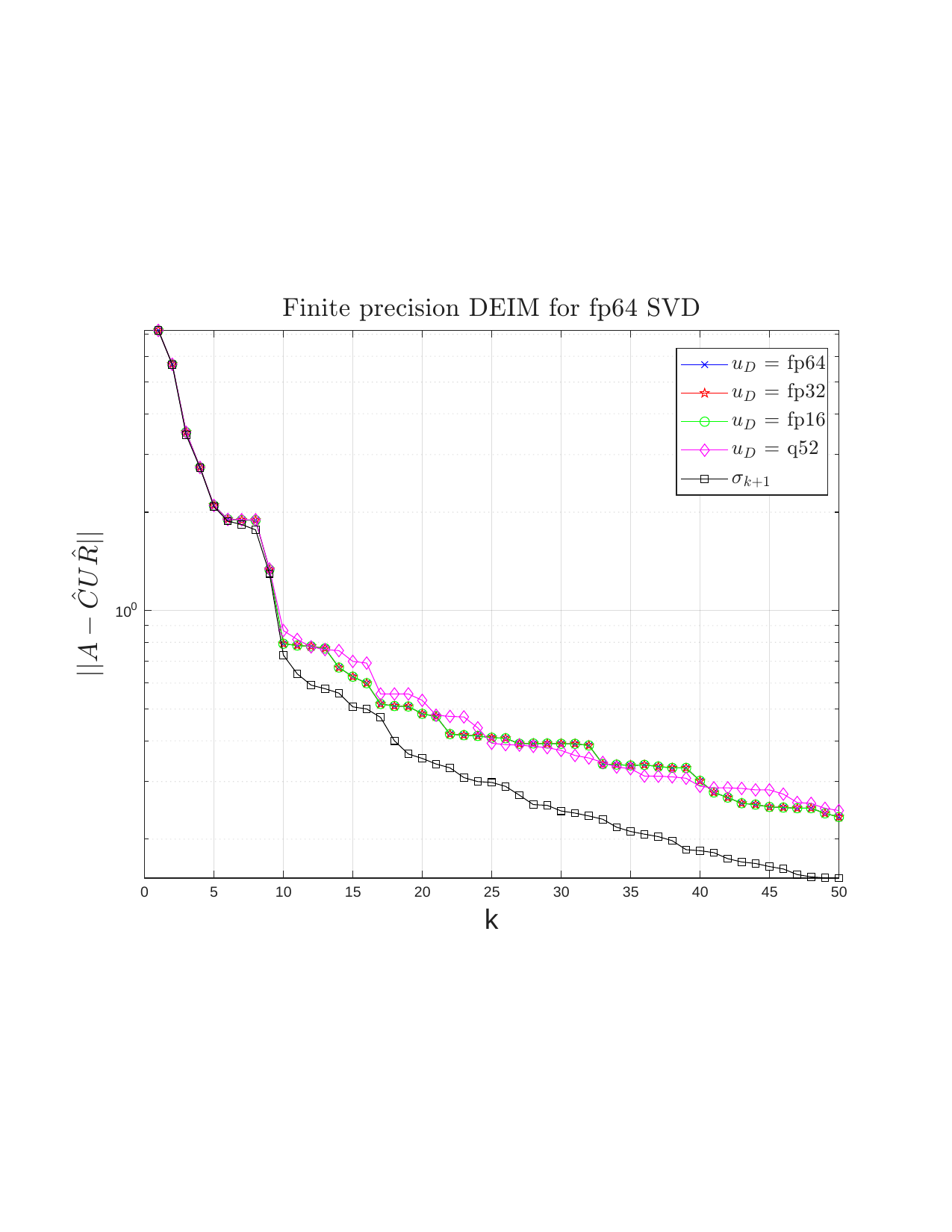} 
    \includegraphics[trim=1.5cm 6cm 1.5cm 6cm, clip, width=0.45\linewidth]{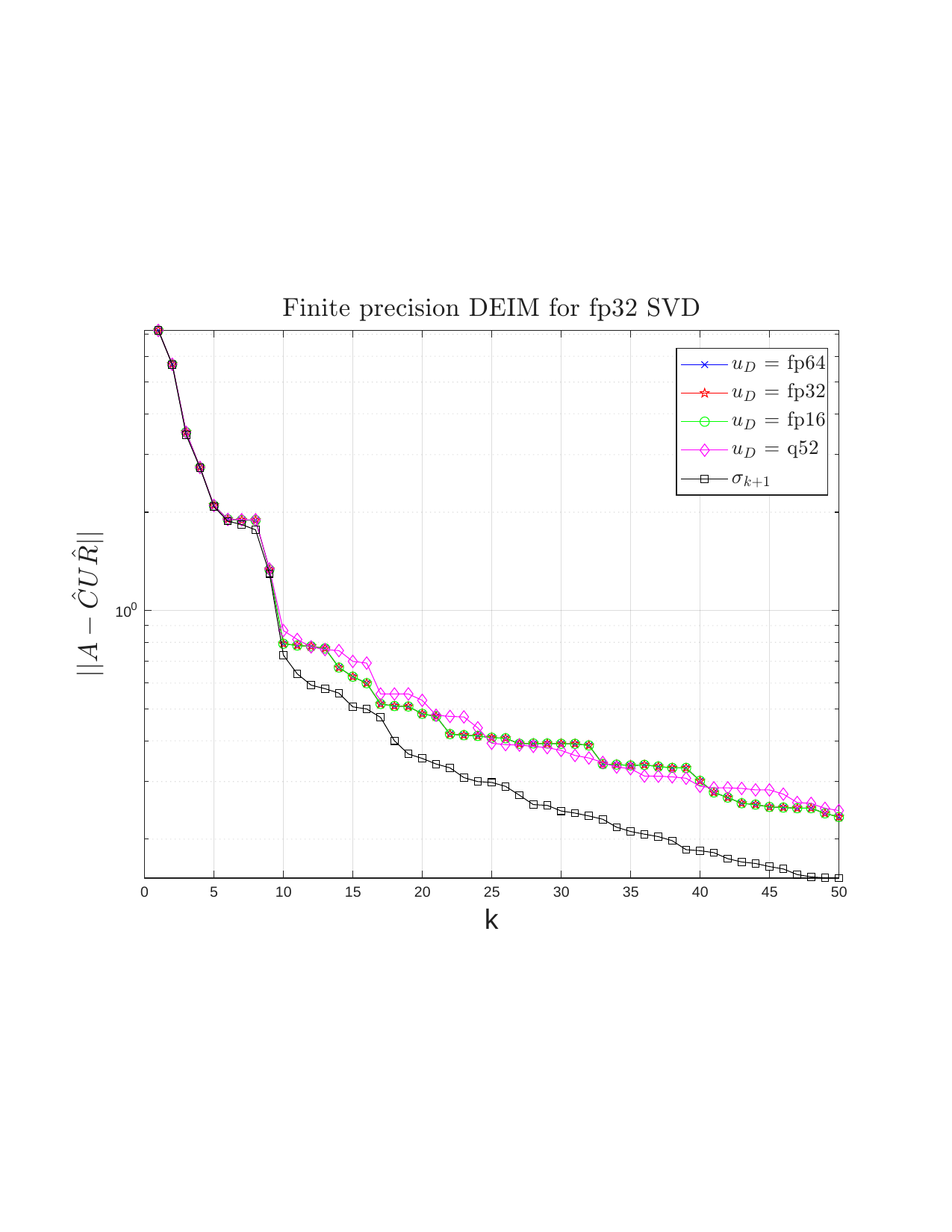} \\
    \includegraphics[trim=1.5cm 6cm 1.5cm 6cm, clip, width=0.45\linewidth]{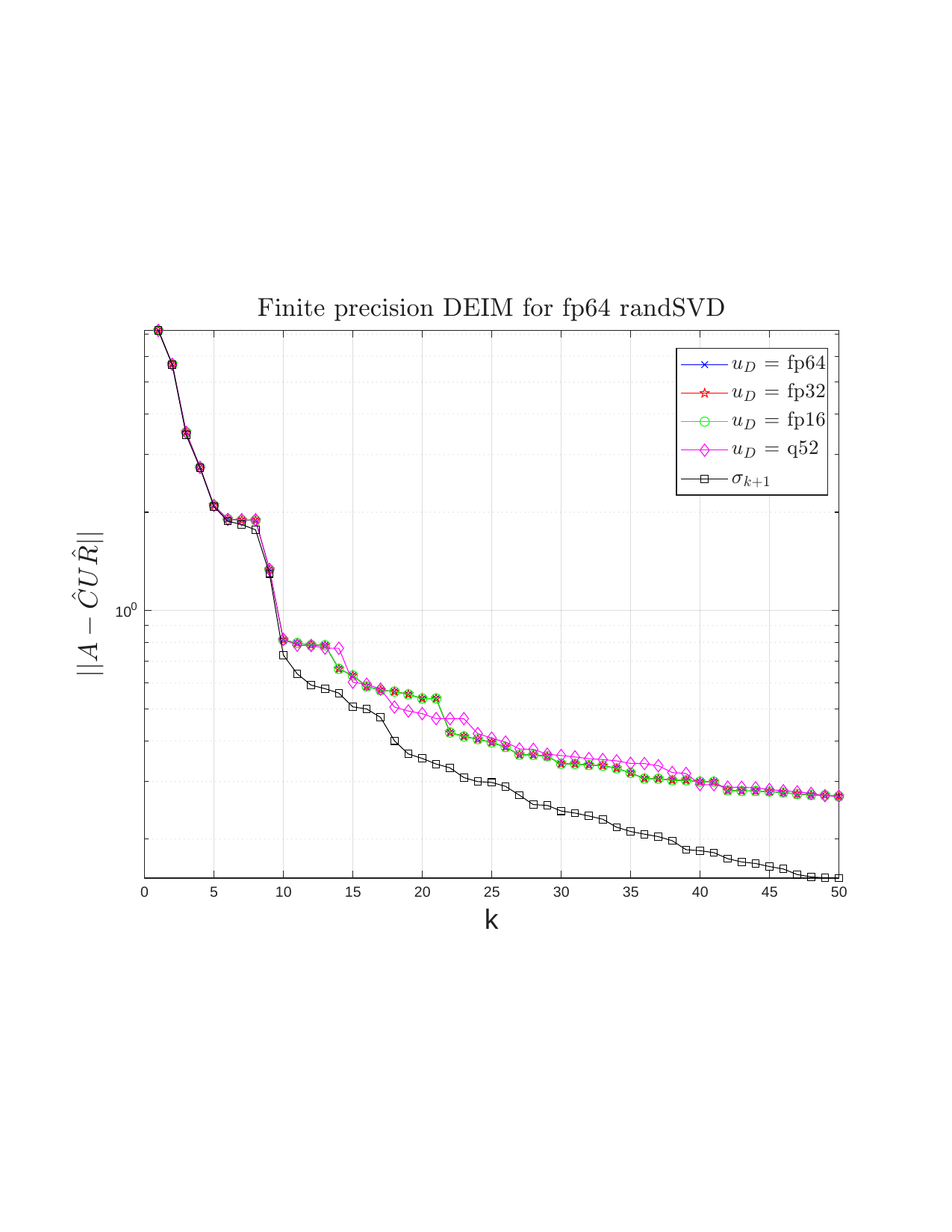} 
    \includegraphics[trim=1.5cm 6cm 1.5cm 6cm, clip, width=0.45\linewidth]{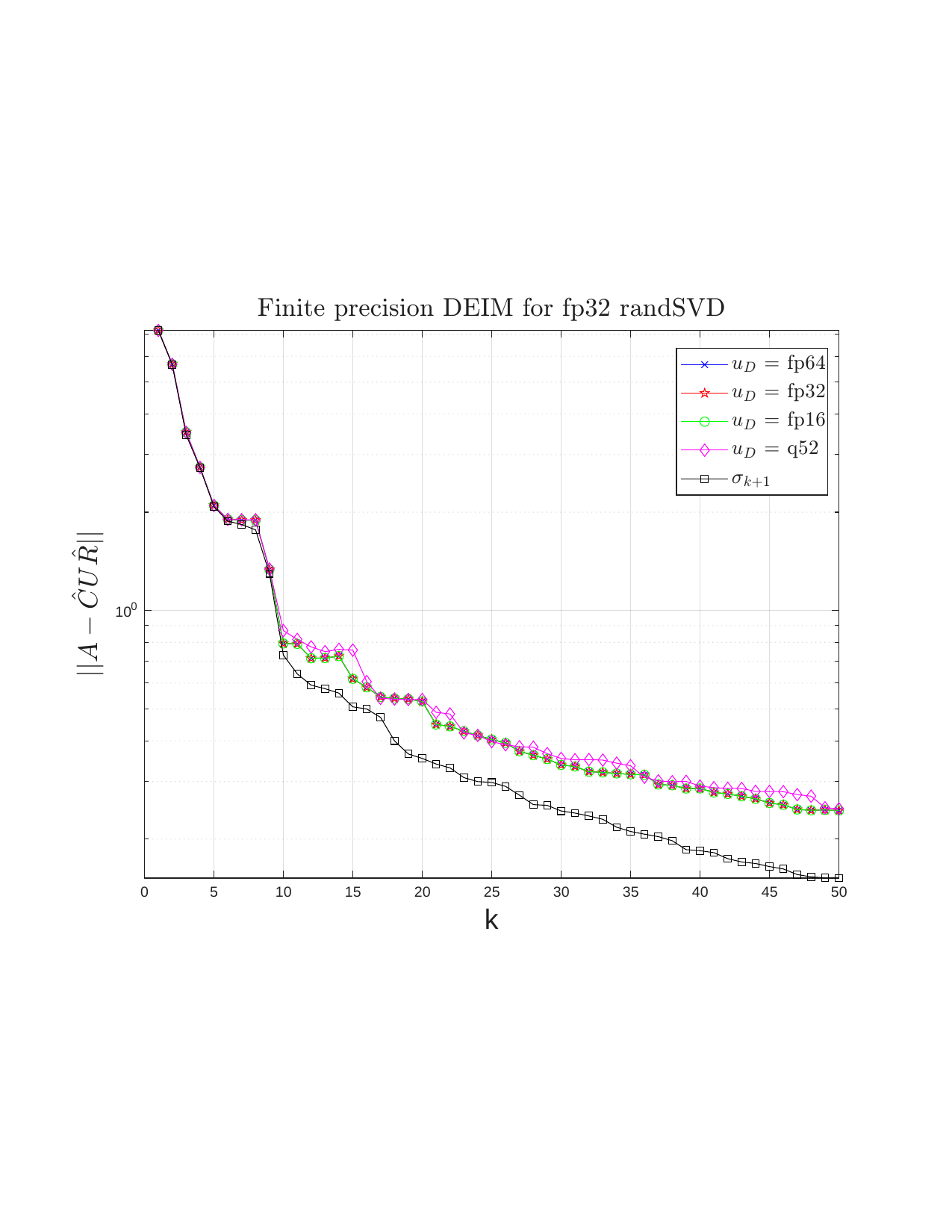} 
    \caption{Error $\Vert A-\widehat{C}U\widehat{R}\Vert_2$ for DEIM in various precisions, where the input SVD is computed using the LAPACK routine (top row) and a randomized SVD with oversampling $p=10$ (bottom row), both computed in fp64 (left column) and fp32 (right column). }
    \label{fig:ex1deim}
\end{figure}

\begin{figure}
    \centering
    \includegraphics[trim=1.5cm 6cm 1.5cm 6cm, clip,width=0.45\linewidth]{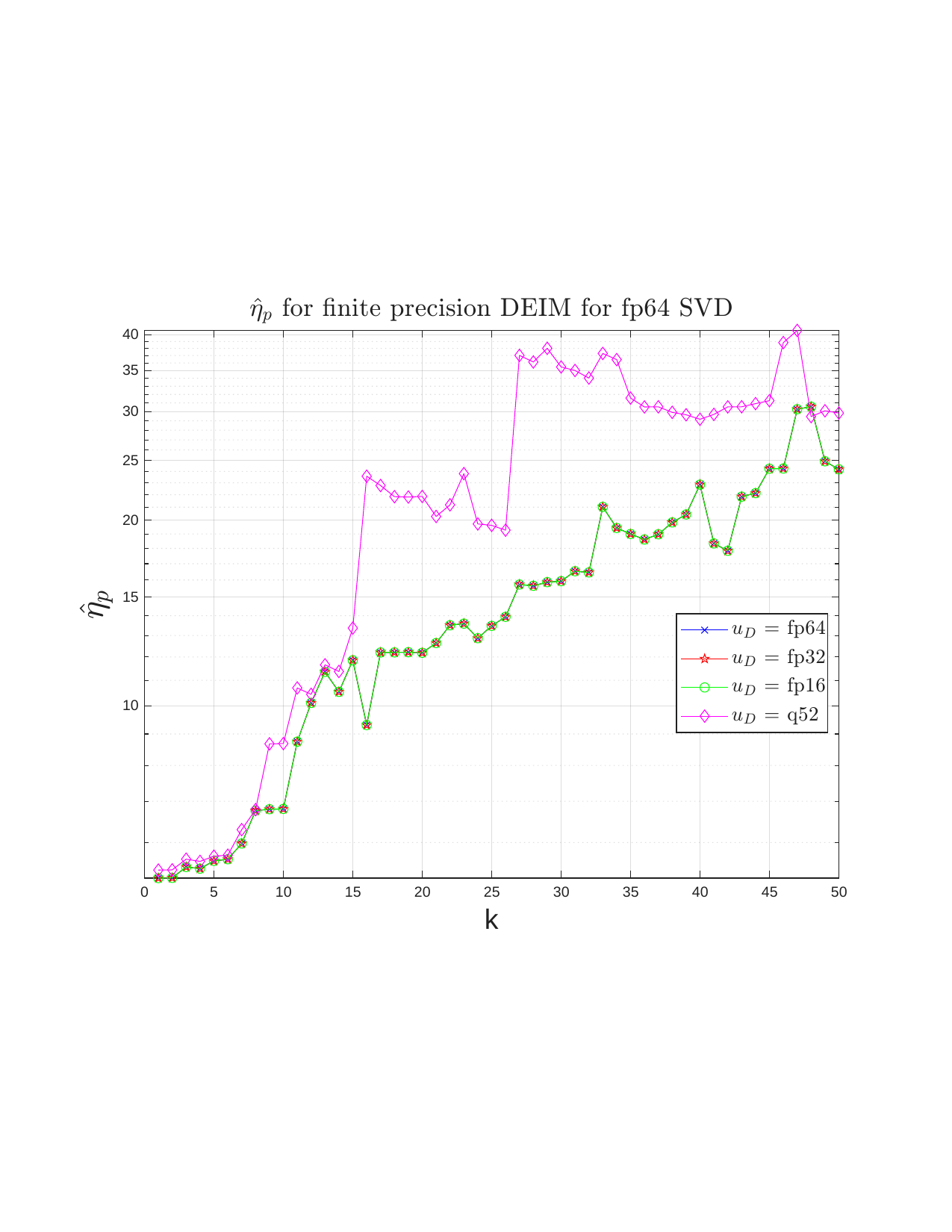} 
    \includegraphics[trim=1.5cm 6cm 1.5cm 6cm, clip,width=0.45\linewidth]{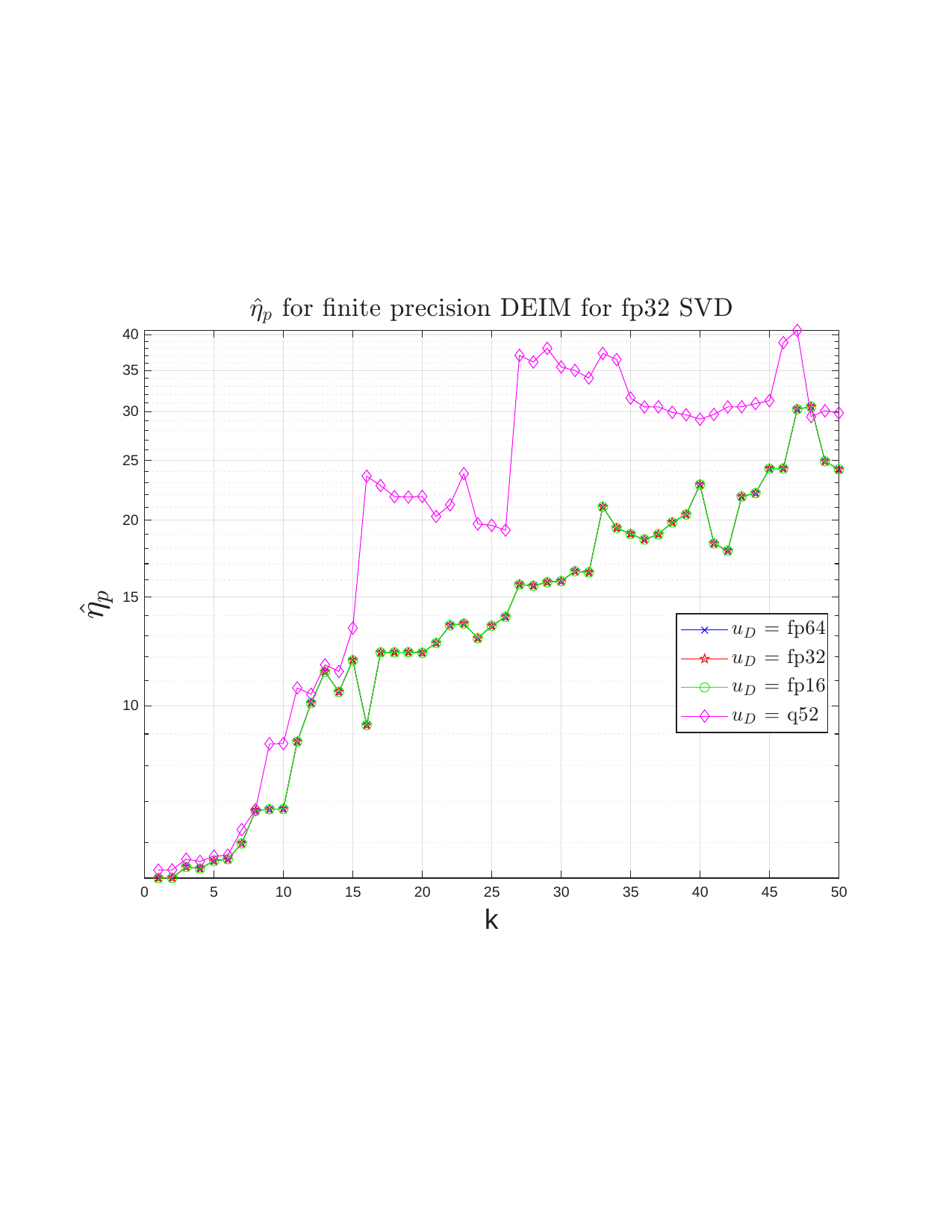} \\
    \includegraphics[trim=1.5cm 6cm 1.5cm 6cm, clip,width=0.45\linewidth]{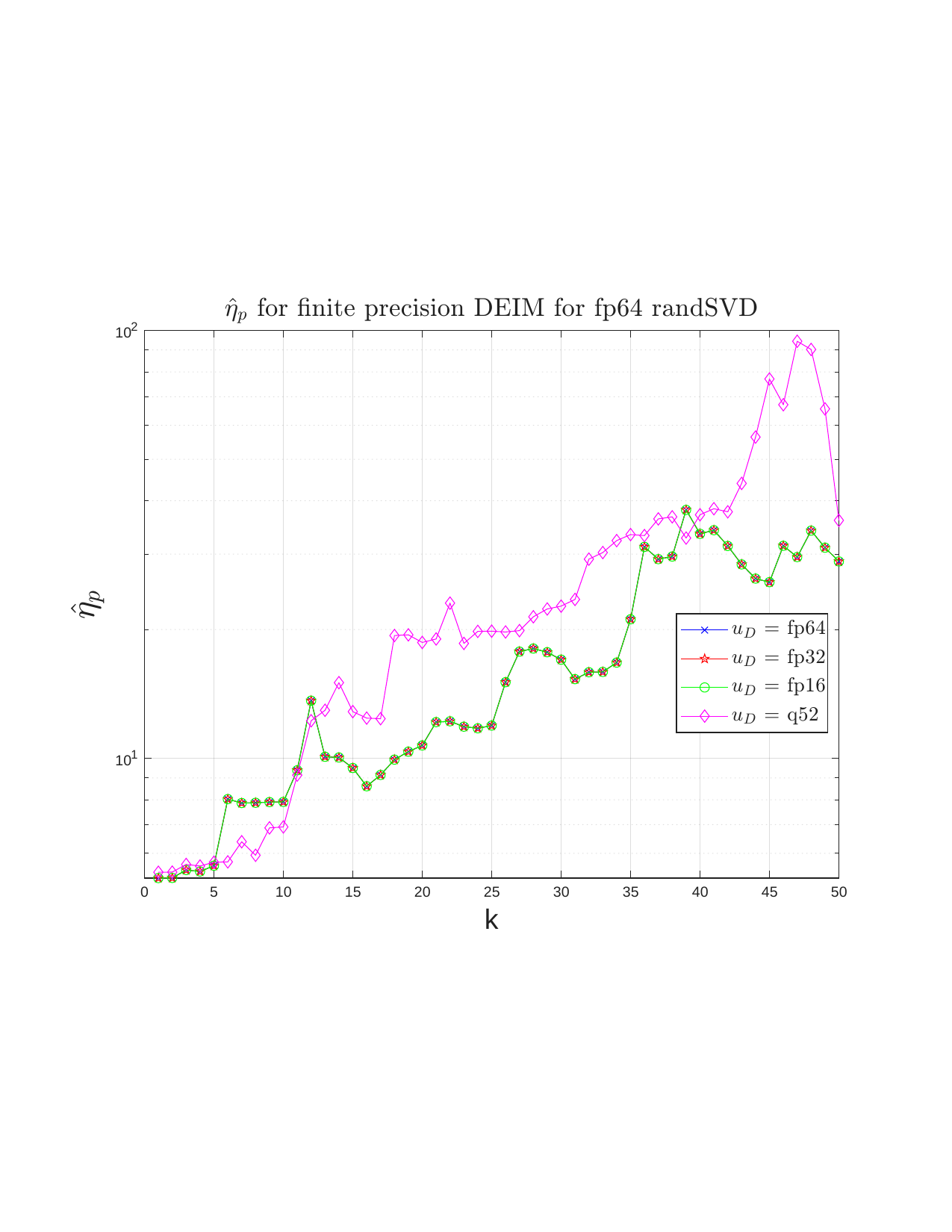} 
    \includegraphics[trim=1.5cm 6cm 1.5cm 6cm, clip,width=0.45\linewidth]{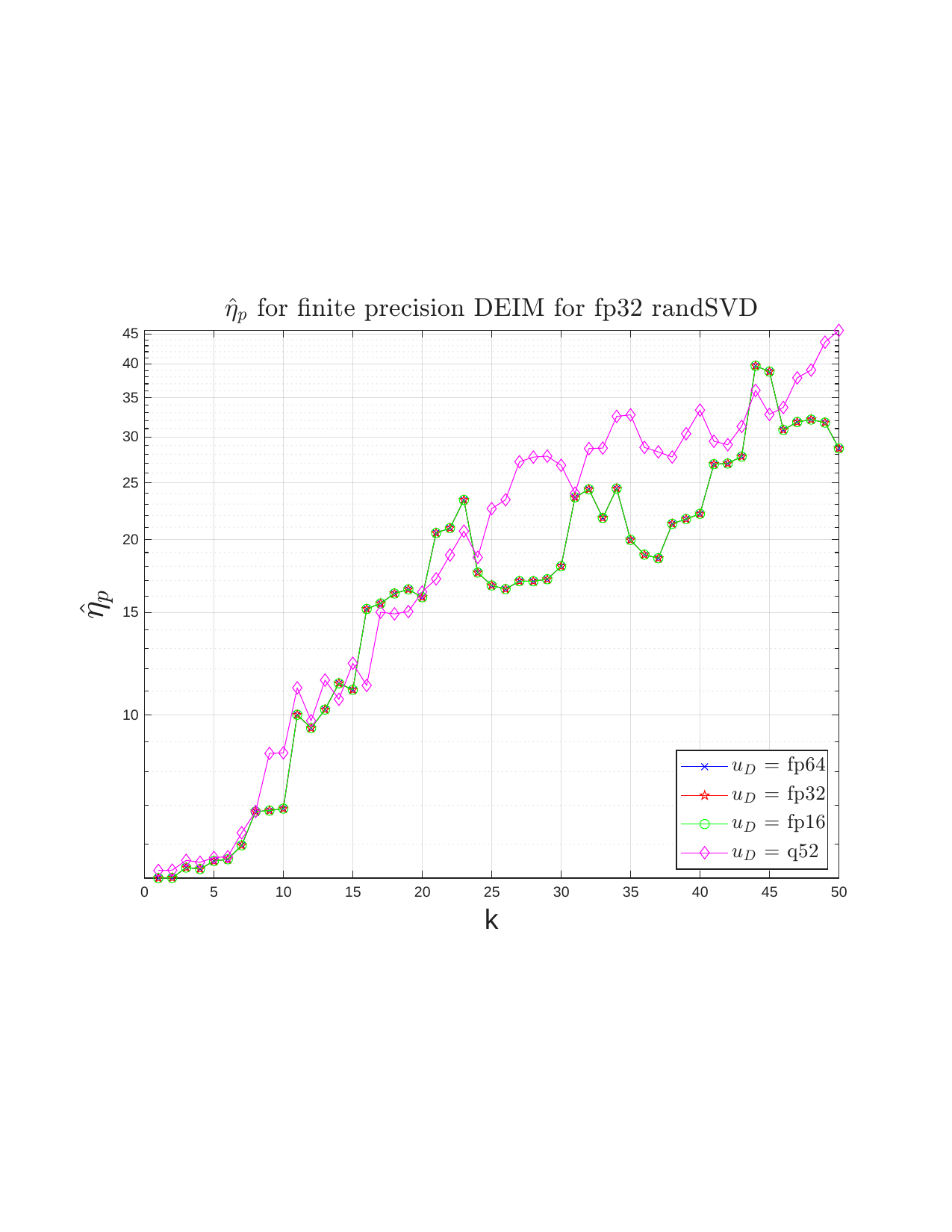} 
    \caption{Value of the amplification factor $\widehat{\eta}_p$ for DEIM in various precisions, where the input SVD is computed using the LAPACK routine (top row) and a randomized SVD with oversampling $p=10$ (bottom row), both computed in fp64 (left column) and fp32 (right column). }
    \label{fig:ex1eta}
\end{figure}

\subsection{Cancer Genetics Data Set}\label{section:exp2}

\hspace{5mm}In this test case we investigate Example 3 from \cite{sorensen2016deim}. The matrix A contains data of 22,283 probes applied to 107 patients, taken from the GSE10072 cancer genetics data set from the National Institutes of Health. Each $(j,k)$ matrix entry reflects how strongly patient k responds to probe $j$. As in \cite{sorensen2016deim} we center the data by subtracting the mean of each row from all the entries in that row. \par 
As in the previous experiment, we test four different scenarios for the input SVD to DEIM: (1) LAPACK's SVD routine in fp64, (2) LAPACK's SVD routine in fp32, (3) a randomized SVD with oversampling parameter $p=10$ in fp64, and (4) a randomized SVD with oversampling parameter $p=10$ in fp32. For each scenario, we run DEIM in fp64, fp32, fp16, and q52 (quarter precision with 5 exponent bits and 2 explicit significand bits), for $k = 1:50$. In Figure \ref{fig:ex2deim}, we can see that there is no significant difference in the error between the different precisions in DEIM. The error when $u_D = q52$ shows only a modest additional variation. However, the choice of the SVD method has a much larger effect than the DEIM working precision. \par
In Figure \ref{fig:ex2eta}, for the same scenarios, we plot the value of $\widehat{\eta}_p$. Although the working precision can change the selected indices, the amplification factors remain of the same order.\par 
The experiments indicate that the CUR approximation is less sensitive to the DEIM working precision, down to fp16, and more strongly affected by the accuracy of the SVD approximation. \par

\begin{figure}
    \centering
    \includegraphics[trim=1.5cm 6cm 1.5cm 6cm, clip,width=0.45\linewidth]{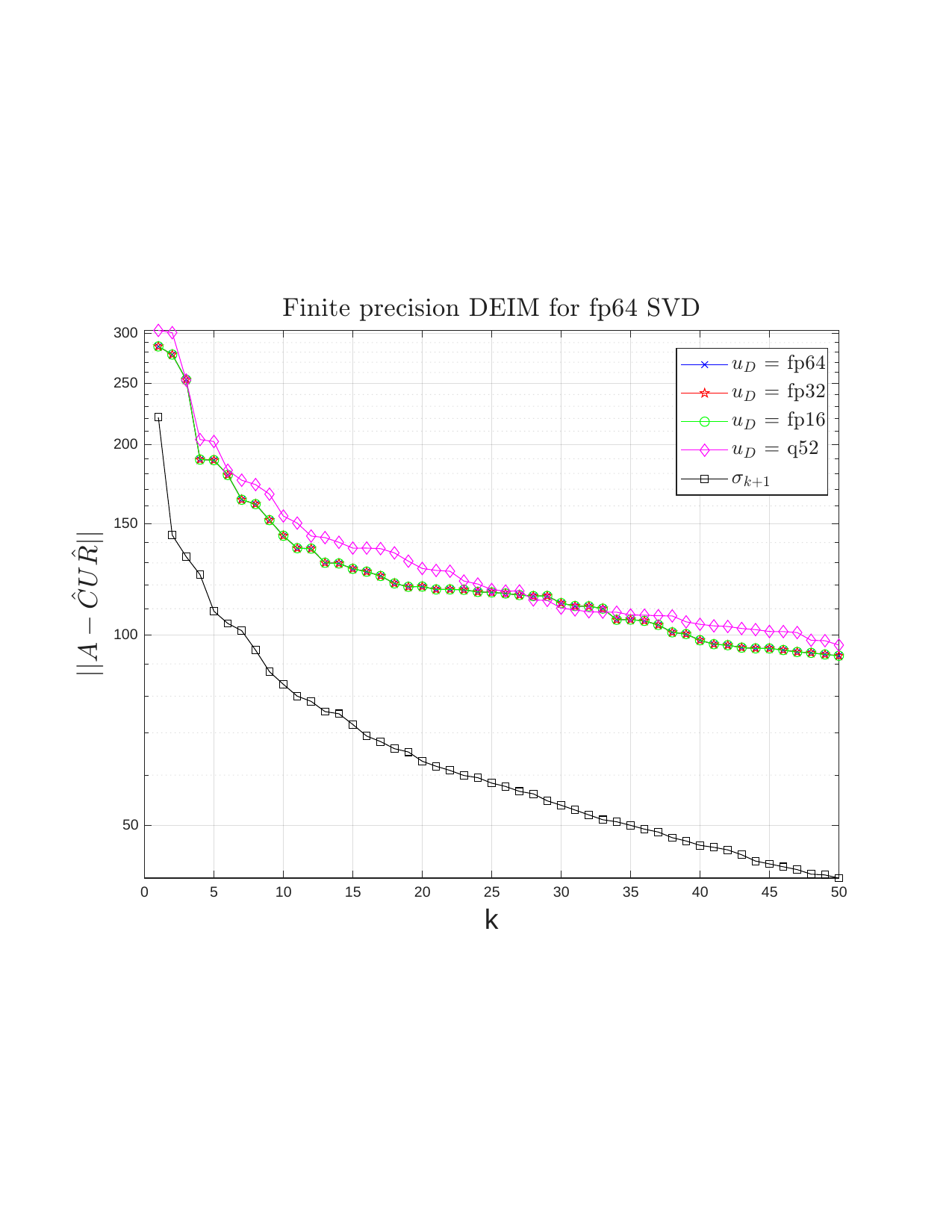} 
    \includegraphics[trim=1.5cm 6cm 1.5cm 6cm, clip,width=0.45\linewidth]{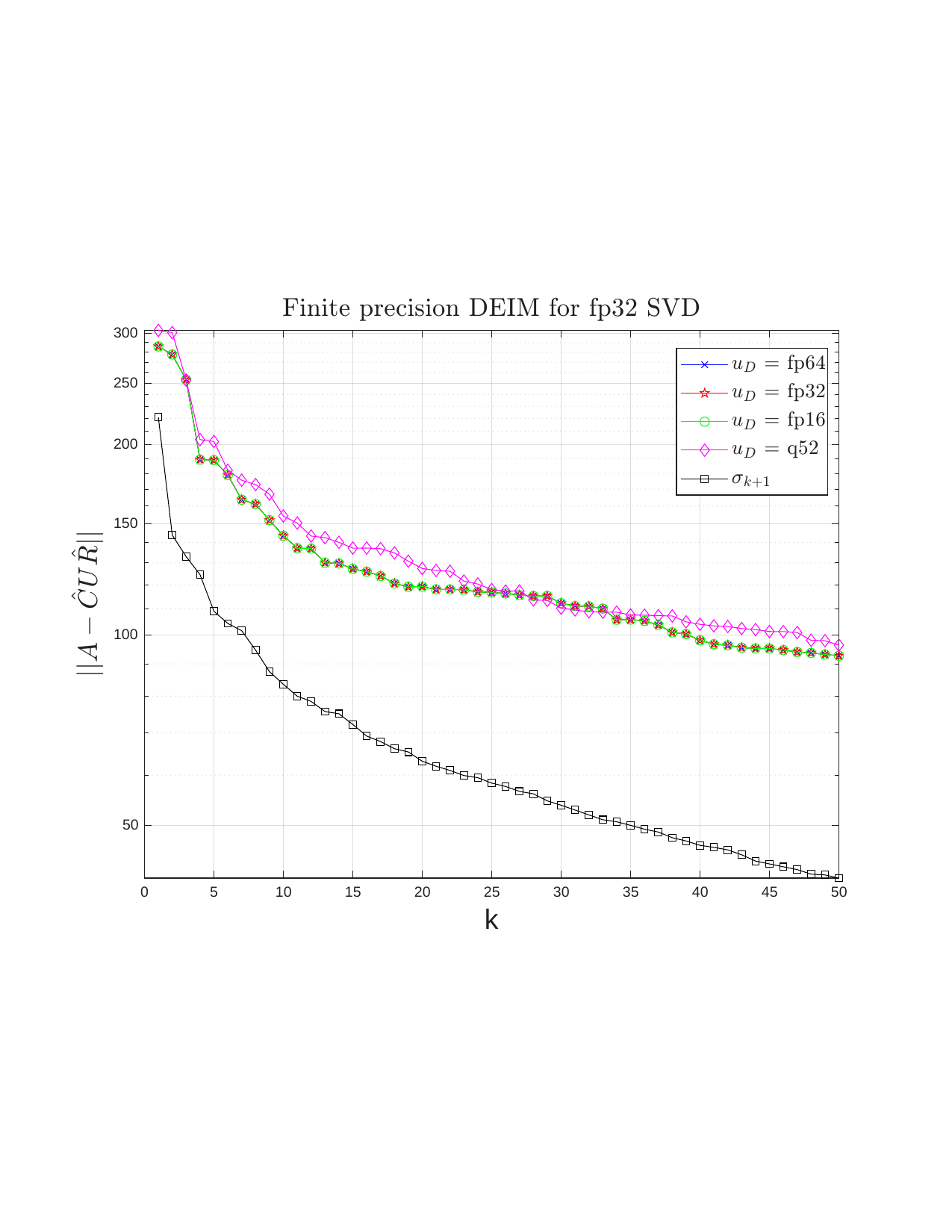} \\
    \includegraphics[trim=1.5cm 6cm 1.5cm 6cm, clip,width=0.45\linewidth]{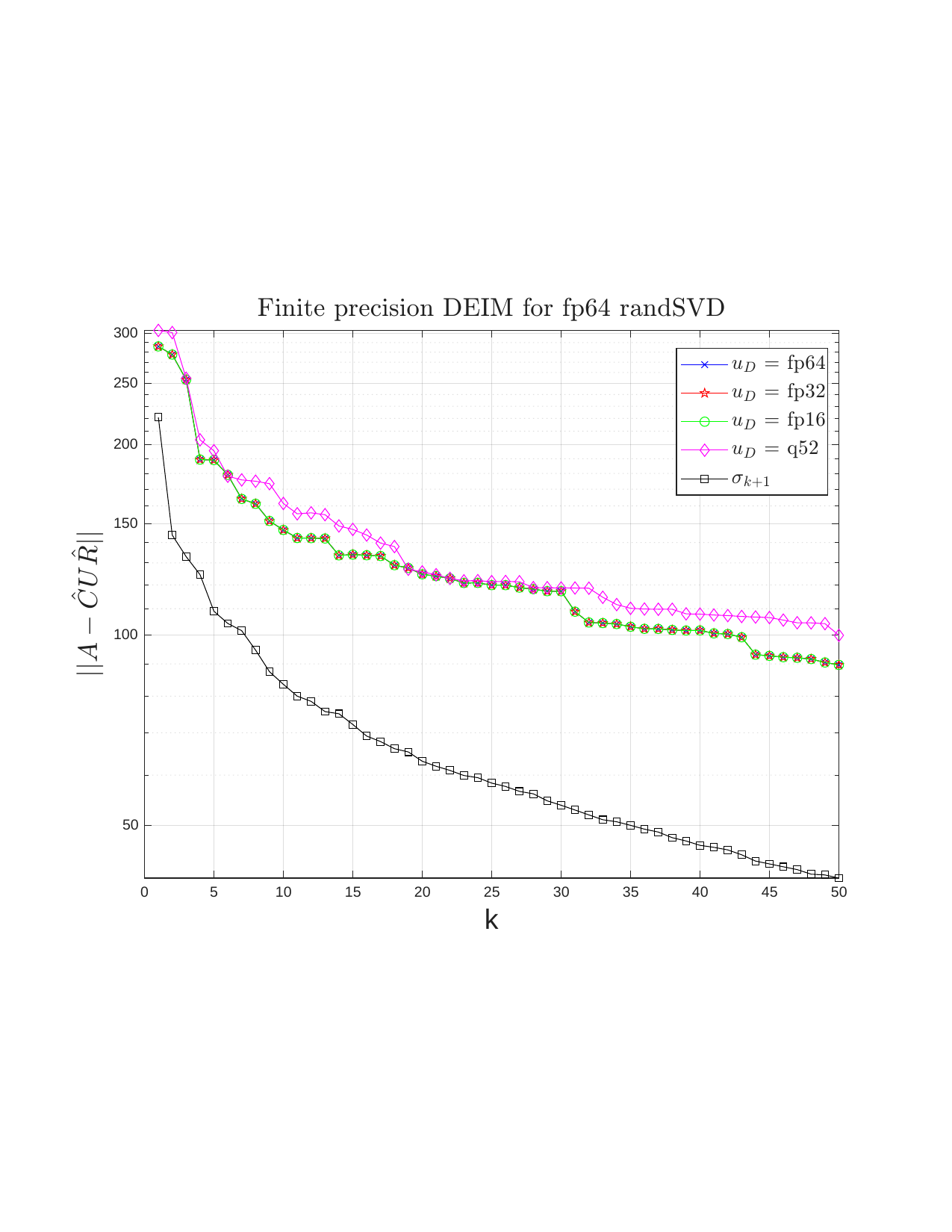} 
    \includegraphics[trim=1.5cm 6cm 1.5cm 6cm, clip,width=0.45\linewidth]{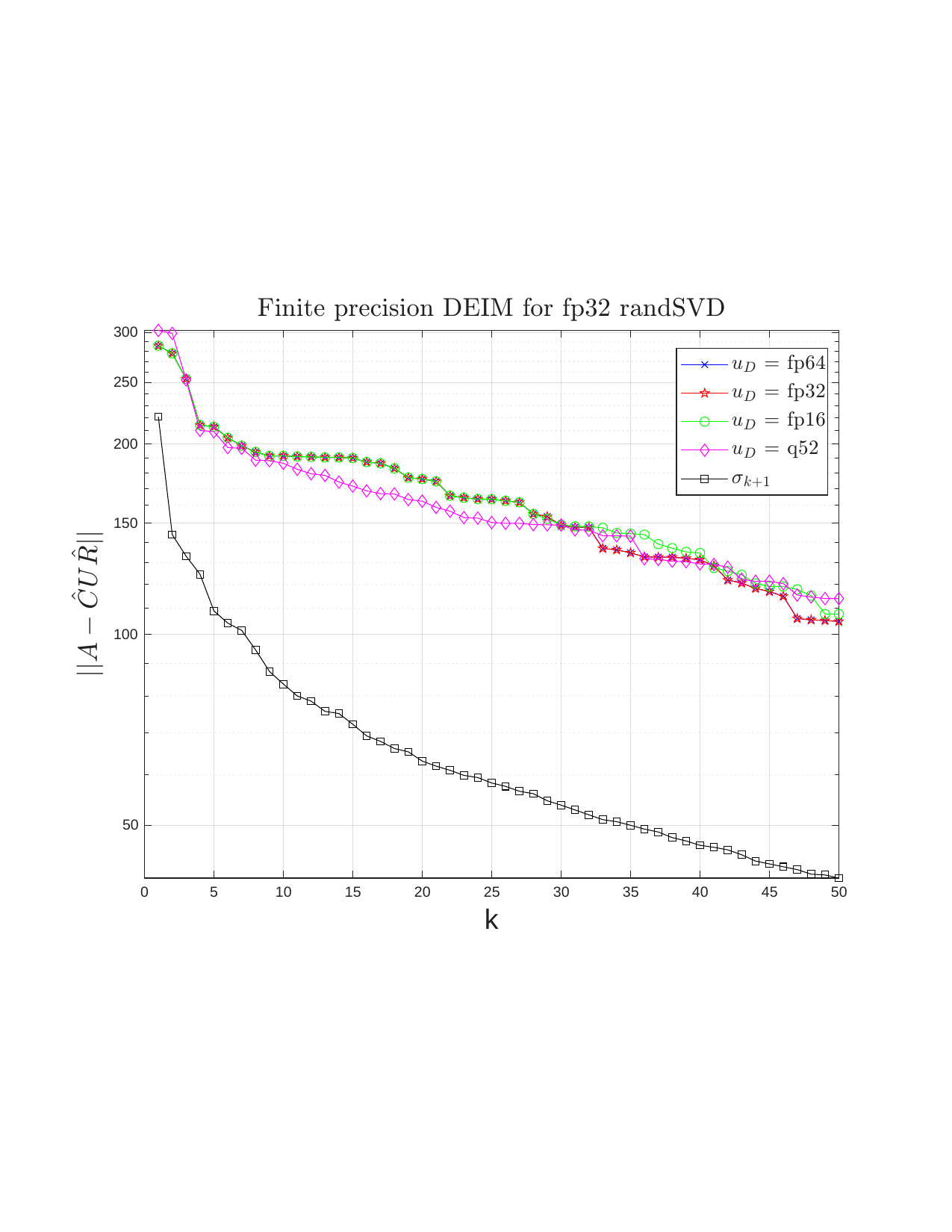} 
    \caption{Error $\Vert A-\widehat{C}U\widehat{R}\Vert_2$ for DEIM in various precisions, where the input SVD is computed using the LAPACK routine (top row) and a randomized SVD with oversampling $p=10$ (bottom row), both computed in fp64 (left column) and fp32 (right column). }
    \label{fig:ex2deim}
\end{figure}

\begin{figure}
    \centering
    \includegraphics[trim=1.5cm 6cm 1.5cm 6cm, clip,width=0.45\linewidth]{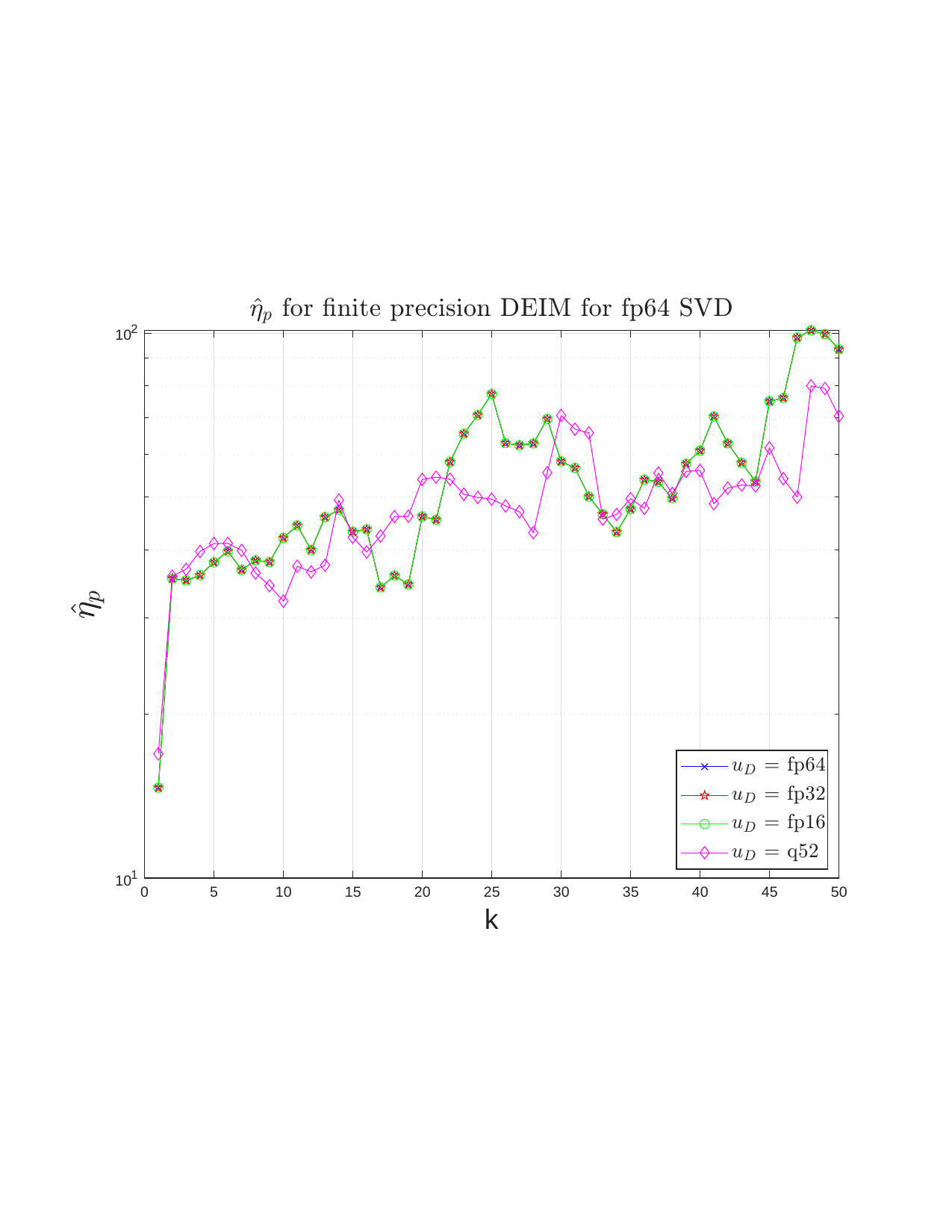} 
    \includegraphics[trim=1.5cm 6cm 1.5cm 6cm, clip,width=0.45\linewidth]{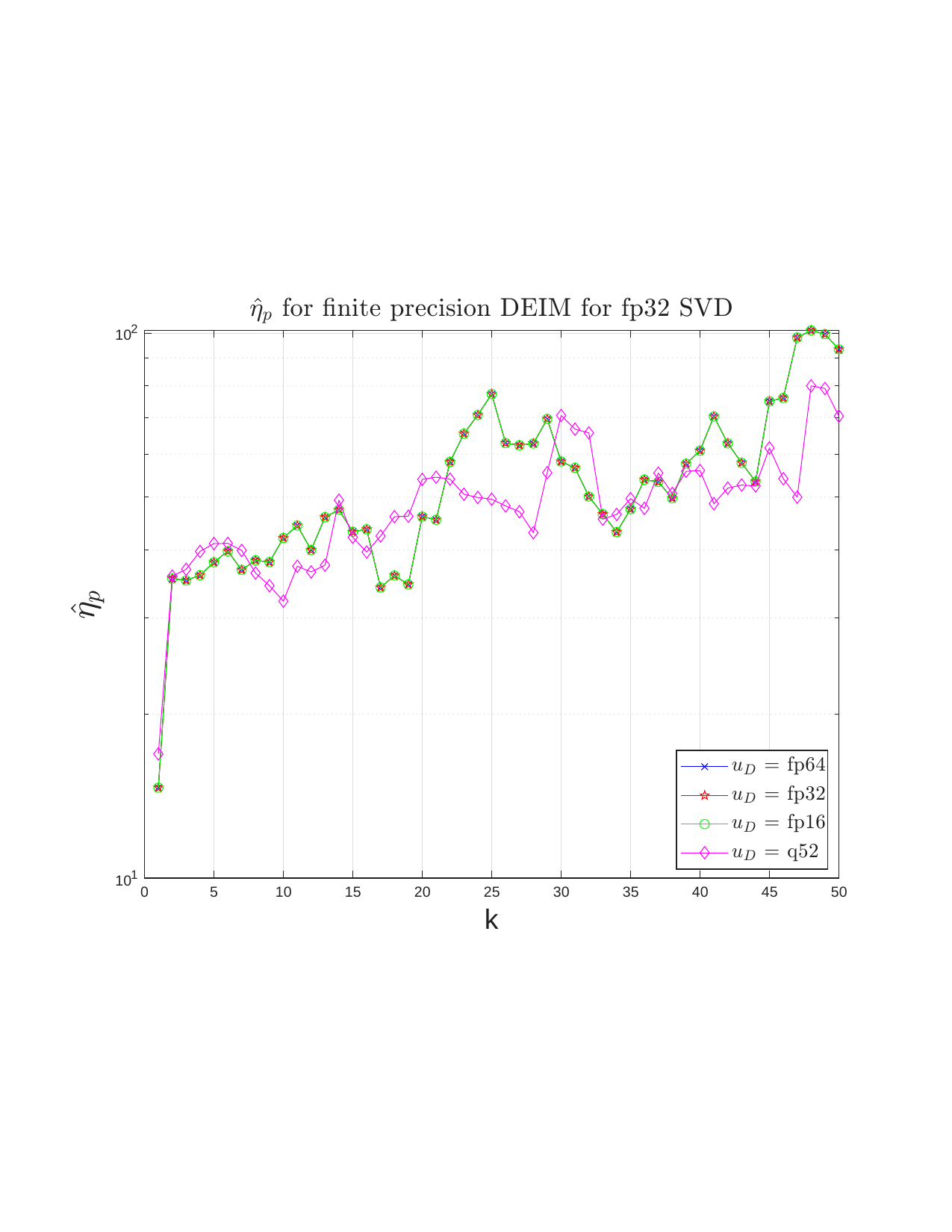} \\
    \includegraphics[trim=1.5cm 6cm 1.5cm 6cm, clip,width=0.45\linewidth]{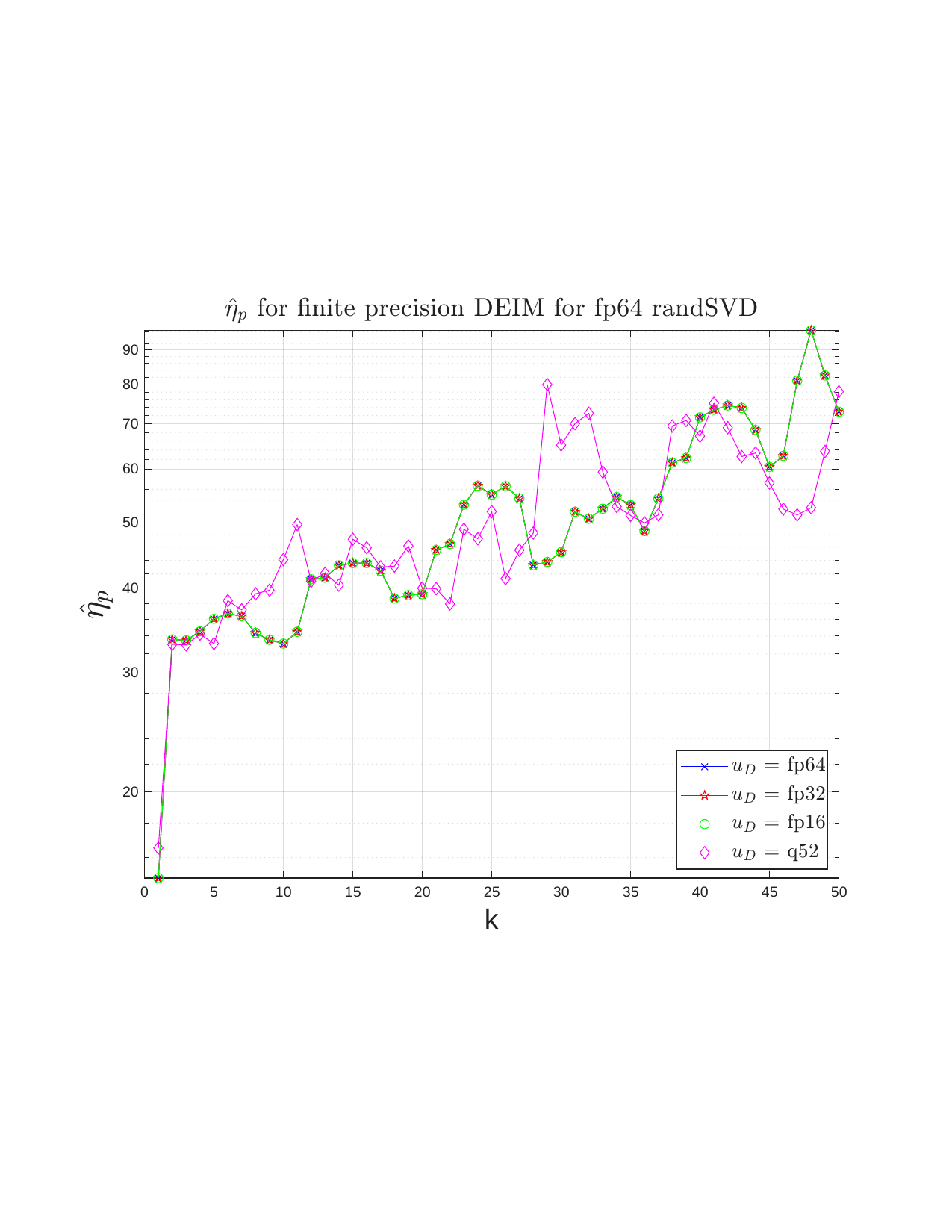} 
    \includegraphics[trim=1.5cm 6cm 1.5cm 6cm, clip,width=0.45\linewidth]{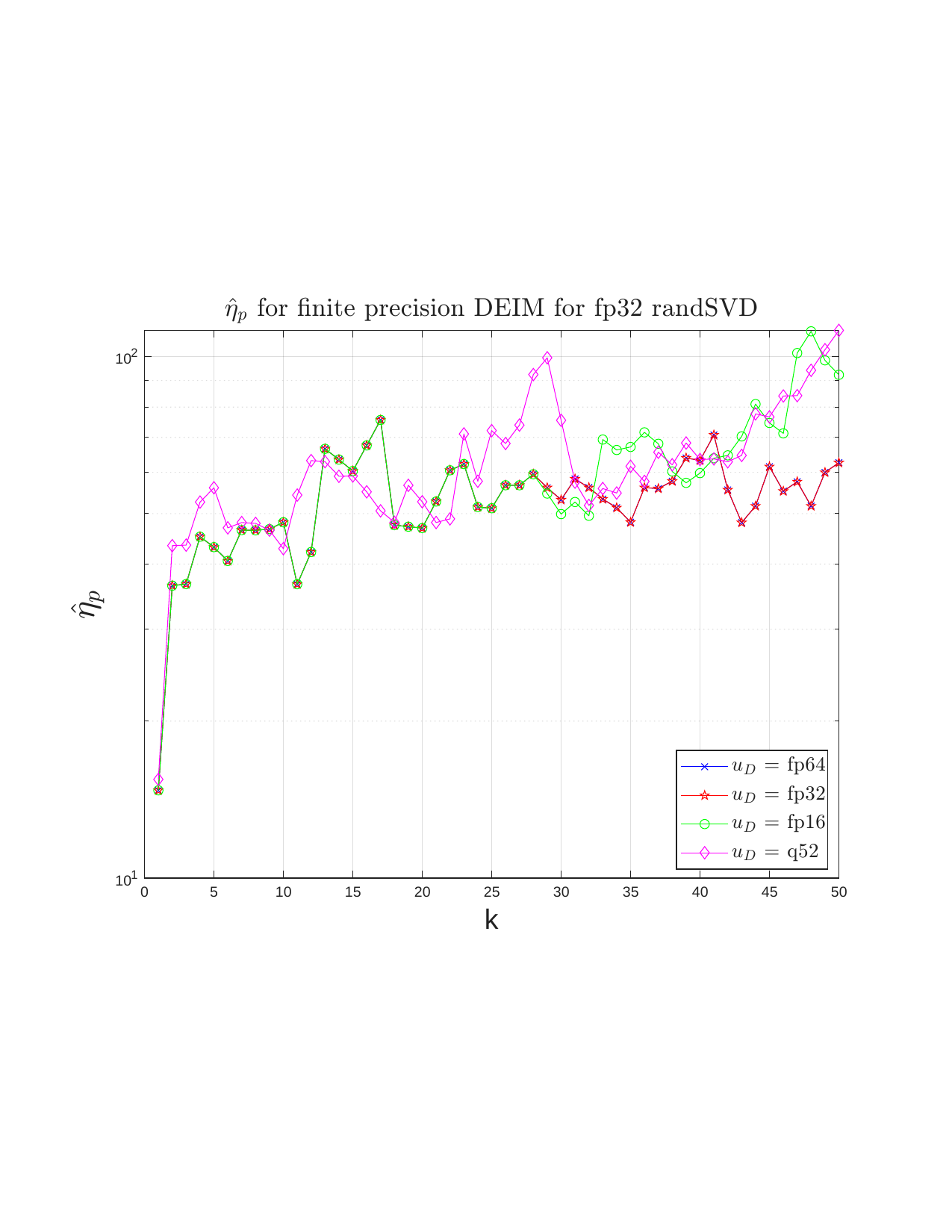} 
    \caption{Value of the amplification factor $\widehat{\eta}_p$ for DEIM in various precisions, where the input SVD is computed using the LAPACK routine (top row) and a randomized SVD with oversampling $p=10$ (bottom row), both computed in fp64 (left column) and fp32 (right column).}
    \label{fig:ex2eta}
\end{figure}

\subsection{Iterative SVD Computation in Low Precision}

\hspace{5mm}In this test case we reduce the precision of the SVD computation to fp16. We use a custom implementation of the Golub–Kahan–Lanczos bidiagonalization\footnote{\url{https://www.netlib.org/utk/people/JackDongarra/etemplates/node198.html}} with relative tolerance of $10^{-1}$, for a random starting vector and a maximum basis of 3 times the maximum number of $k$. We run experiments on both matrices from the previous experiments. Although we do not theoretically analyze this method in the paper to derive a condition analogous to \eqref{eq:usvdcond1} and \eqref{eq:usvdcond2}, Figures \ref{fig:ex3deim_mat1} and \ref{fig:ex3deim_mat2} show that even a very low precision SVD can be used while the approximation error and the amplification factor remain close to the higher-precision results.

\begin{figure}[t]
    \centering
    \includegraphics[trim=1.5cm 6cm 1.5cm 6cm, clip,width=0.45\linewidth]{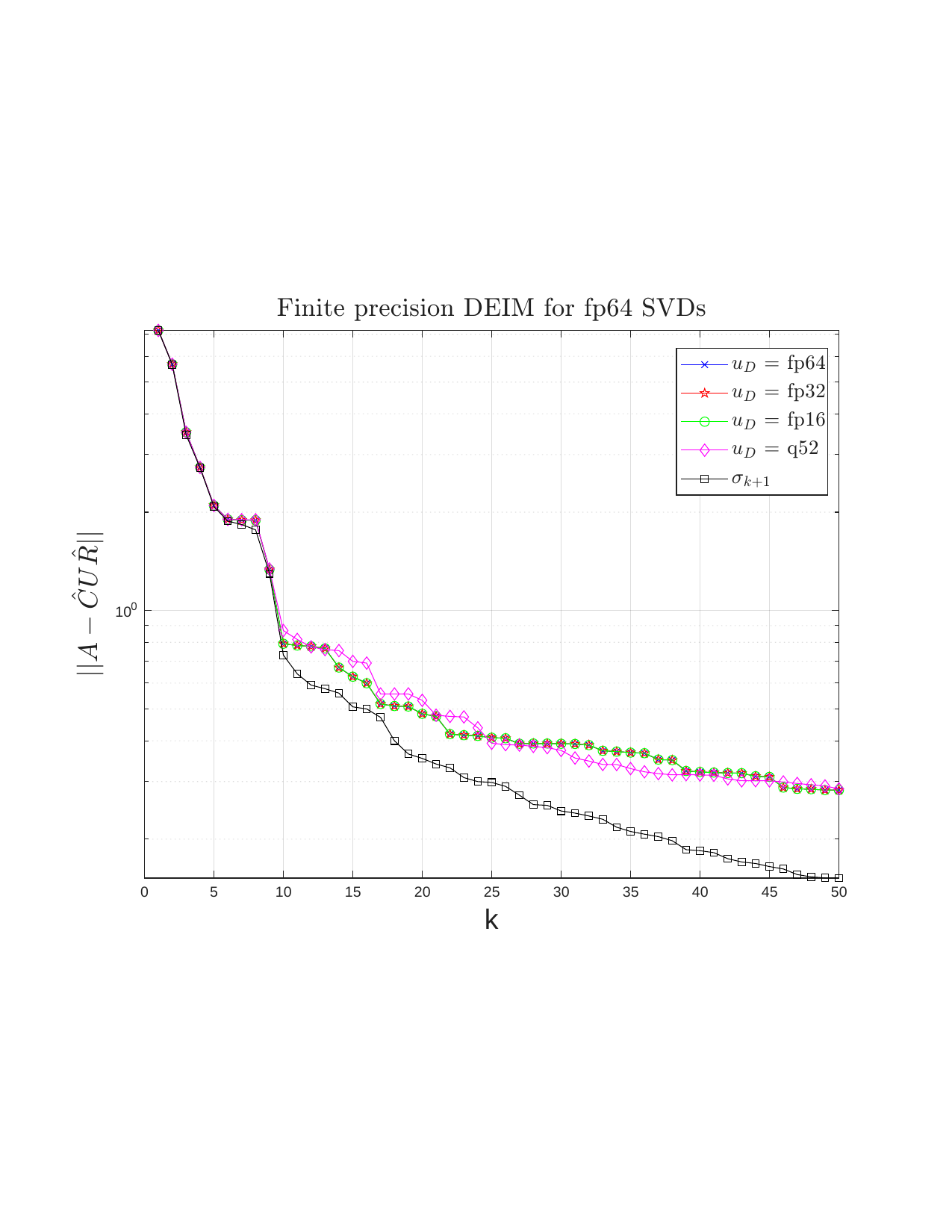} 
    \includegraphics[trim=1.5cm 6cm 1.5cm 6cm, clip,width=0.45\linewidth]{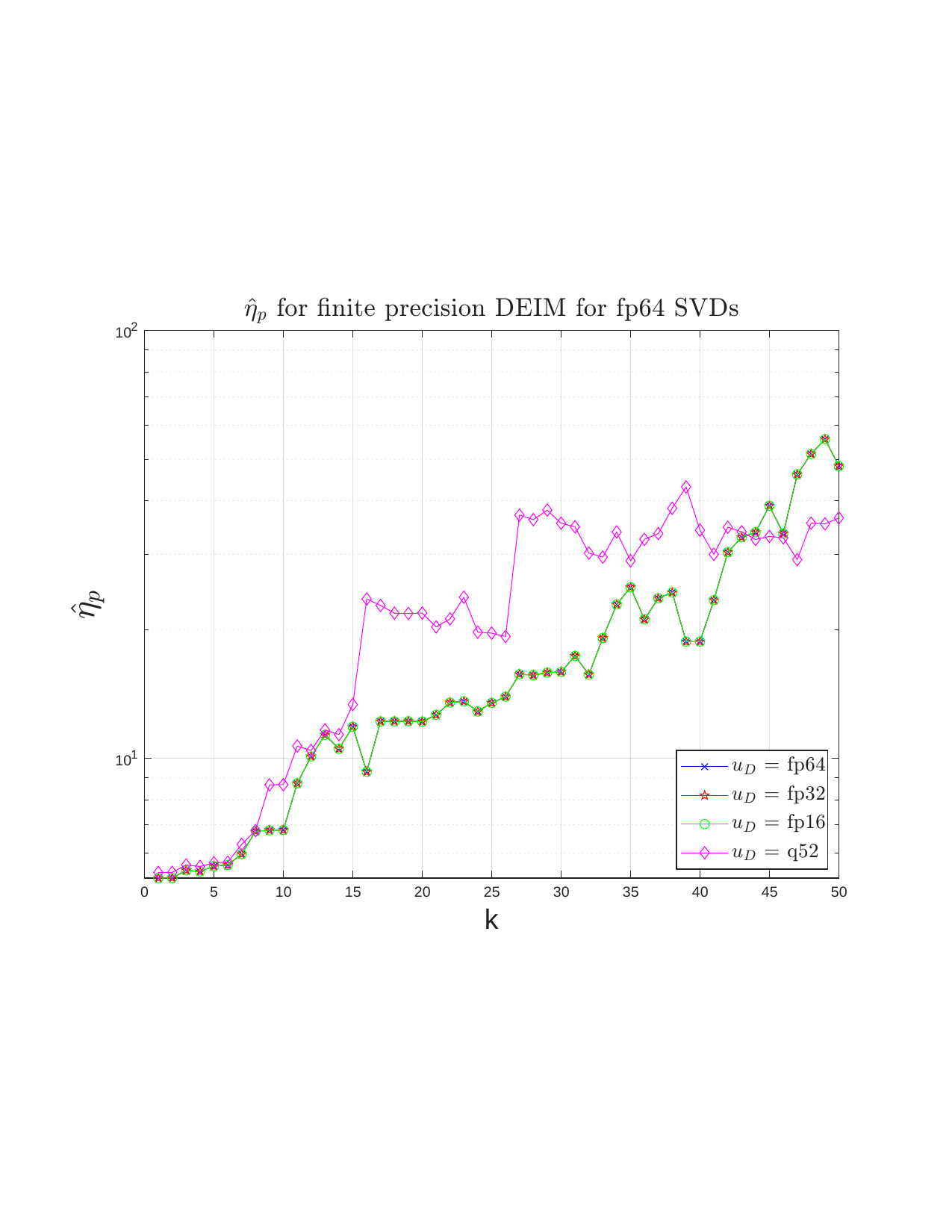} \\
    \includegraphics[trim=1.5cm 6cm 1.5cm 6cm, clip,width=0.45\linewidth]{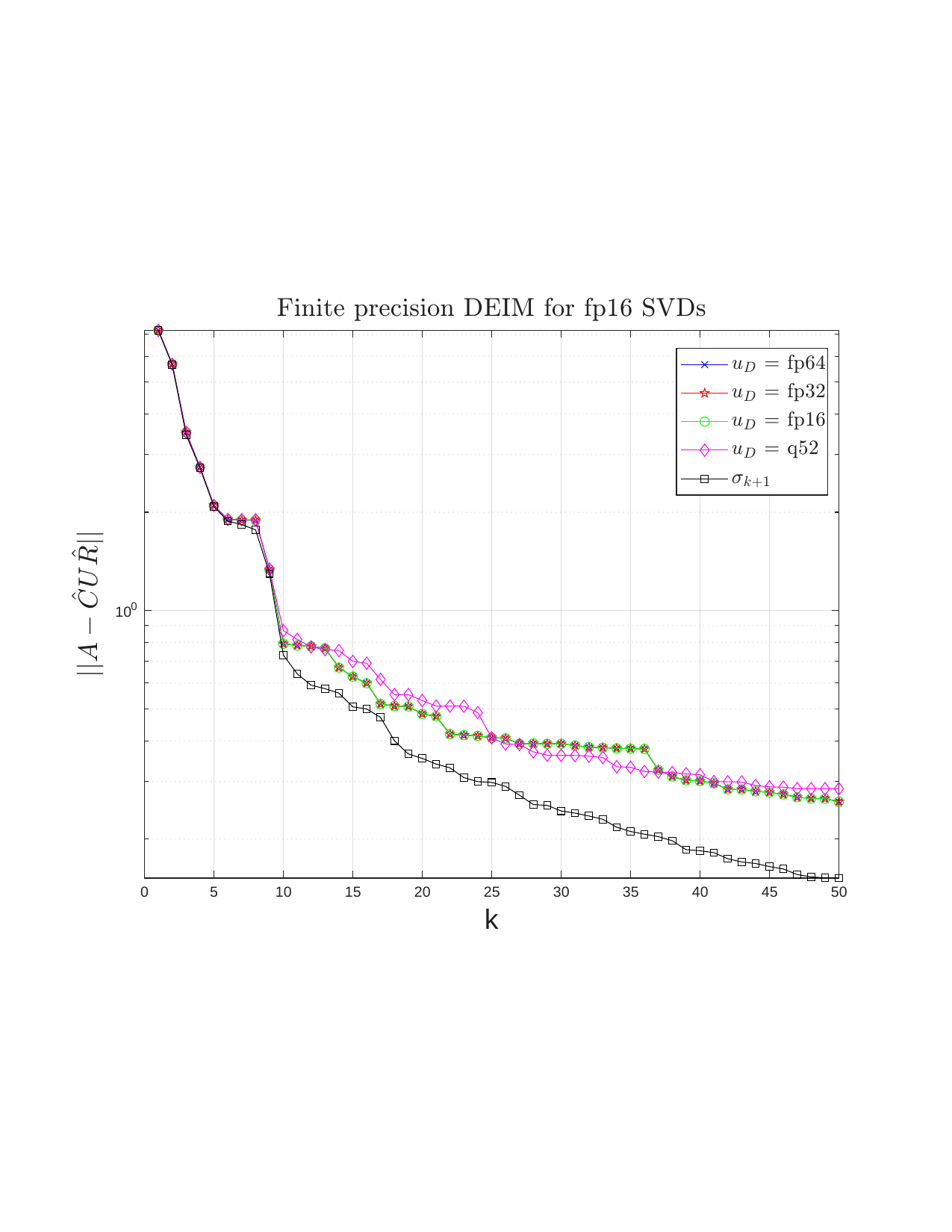} 
    \includegraphics[trim=1.5cm 6cm 1.5cm 6cm, clip,width=0.45\linewidth]{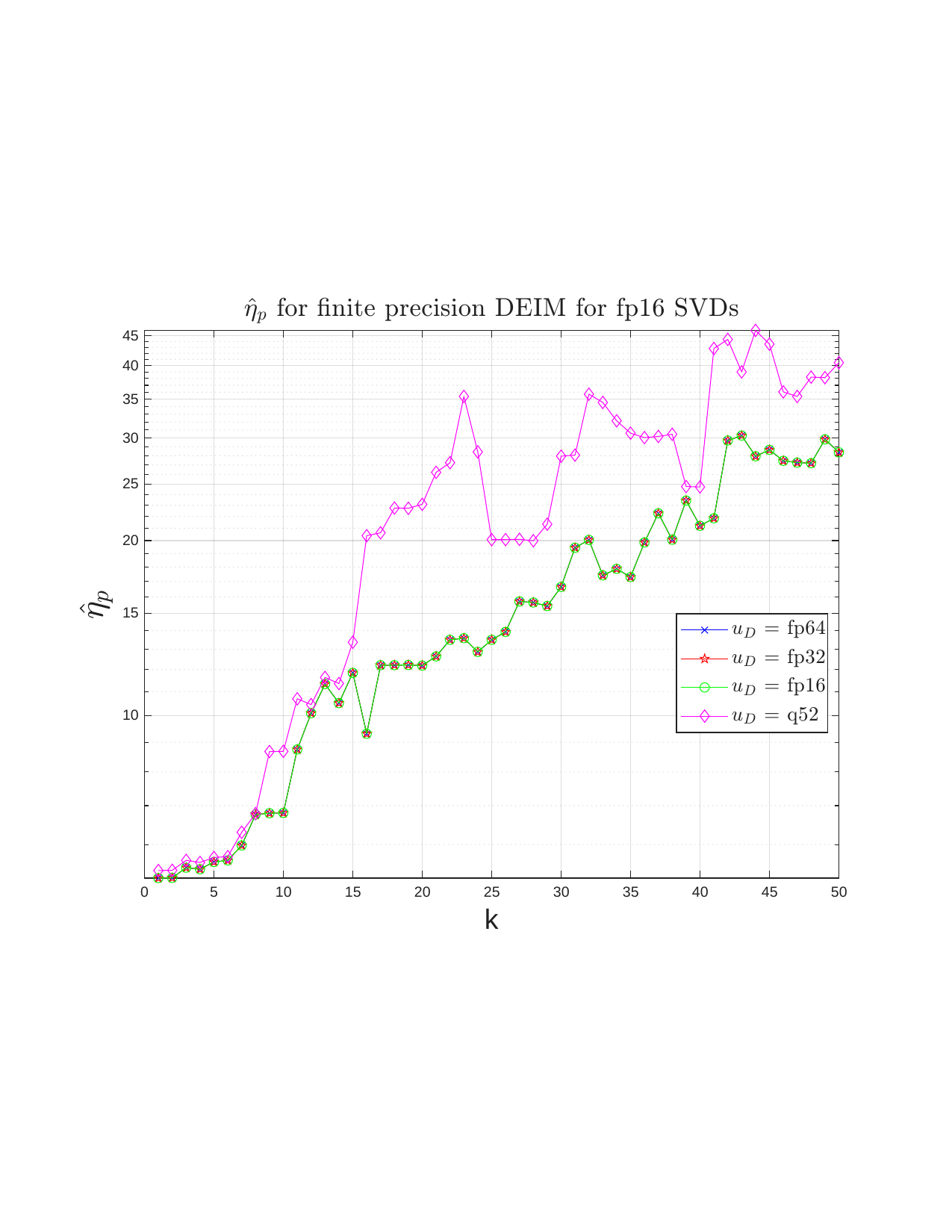} 
    \caption{For the matrix in Section \ref{section:exp1}, Error $\Vert A-\widehat{C}U\widehat{R}\Vert_2$ (left column)  and the value of the amplification factor $\widehat{\eta}_p$ (right column),  where the input SVD is computed using Golub-Kahan Lanczos bidiagonalization and DEIM is run in various precisions.}
    \label{fig:ex3deim_mat1}
\end{figure}

\begin{figure}
    \centering
    \includegraphics[trim=1.5cm 6cm 1.5cm 6cm, clip,width=0.45\linewidth]{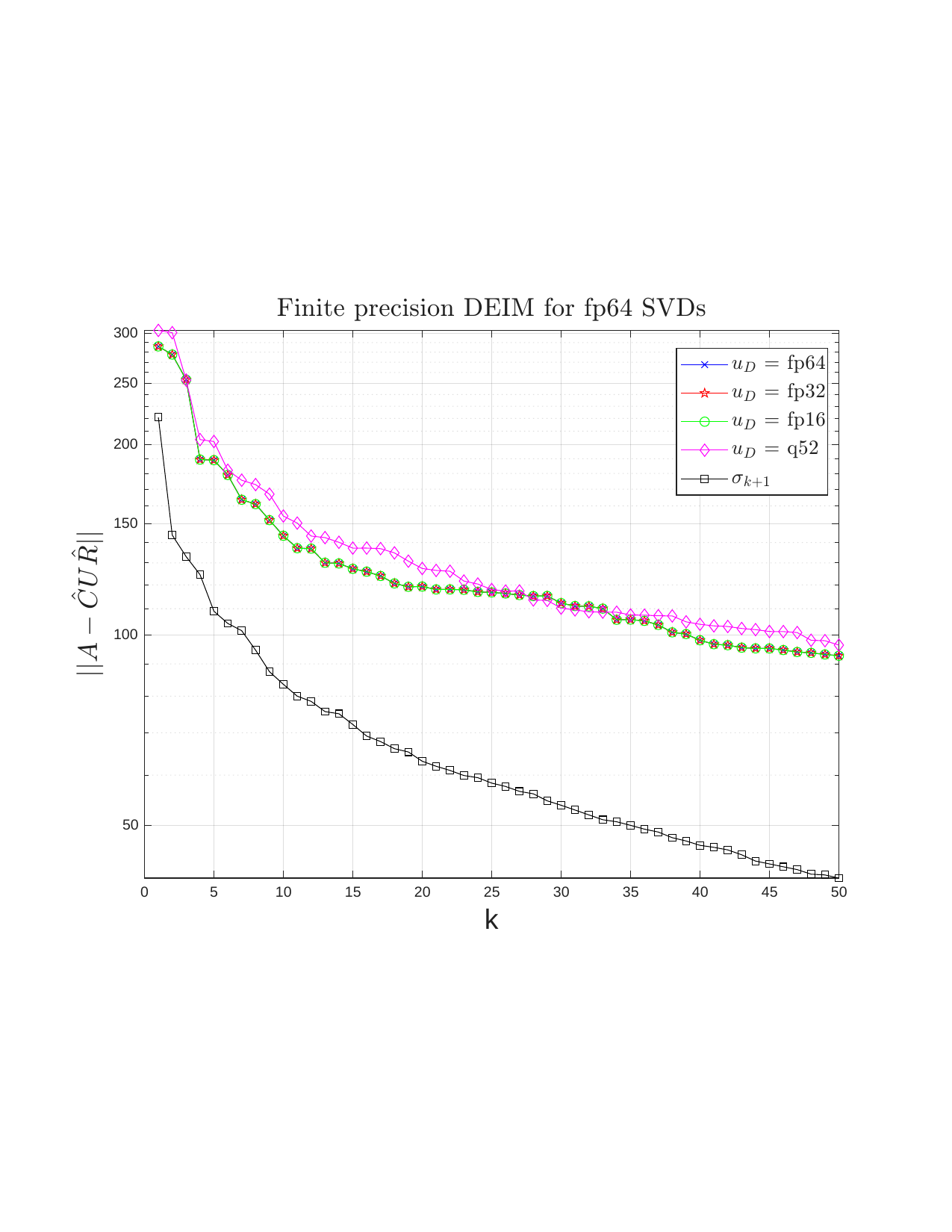} 
    \includegraphics[trim=1.5cm 6cm 1.5cm 6cm, clip,width=0.45\linewidth]{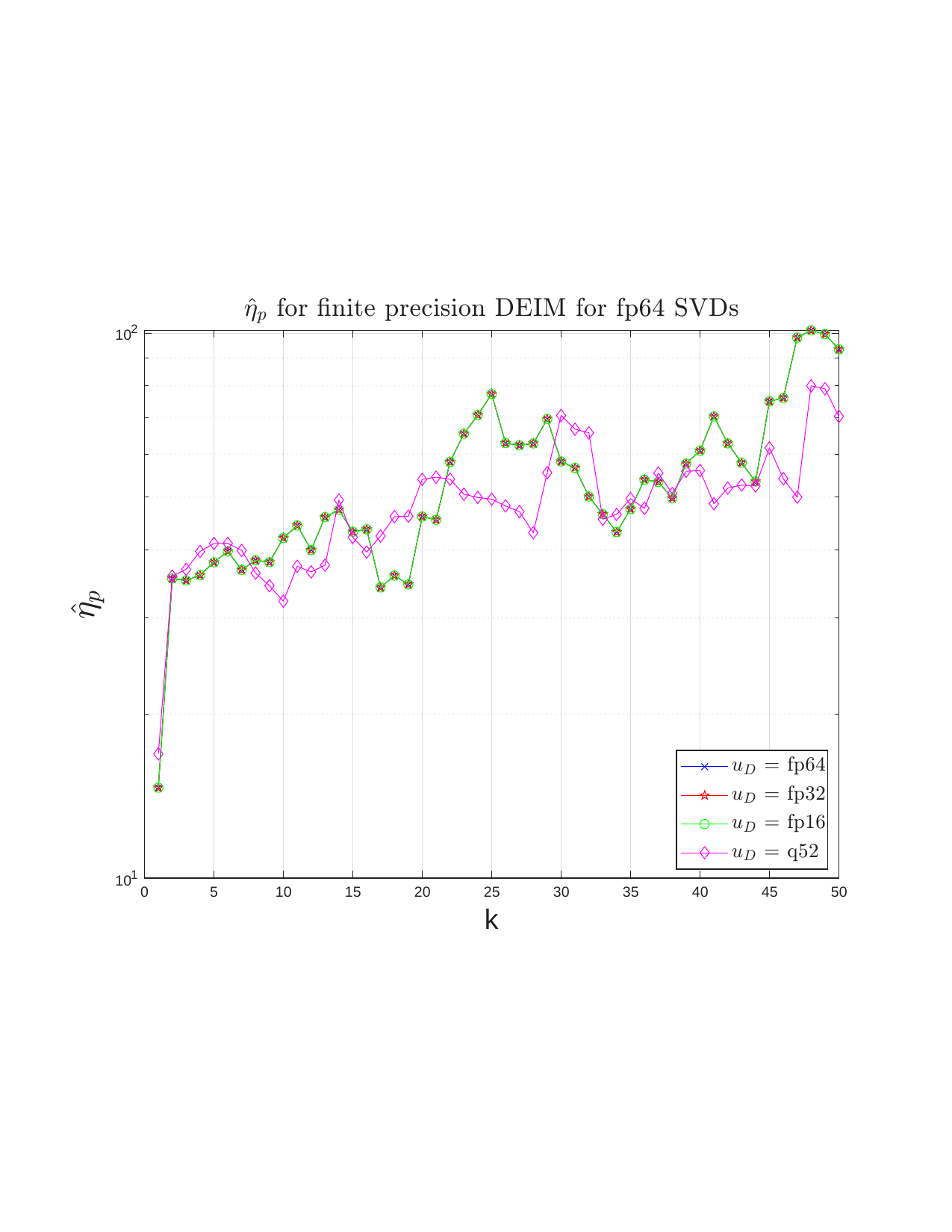} \\
    \includegraphics[trim=1.5cm 6cm 1.5cm 6cm, clip,width=0.45\linewidth]{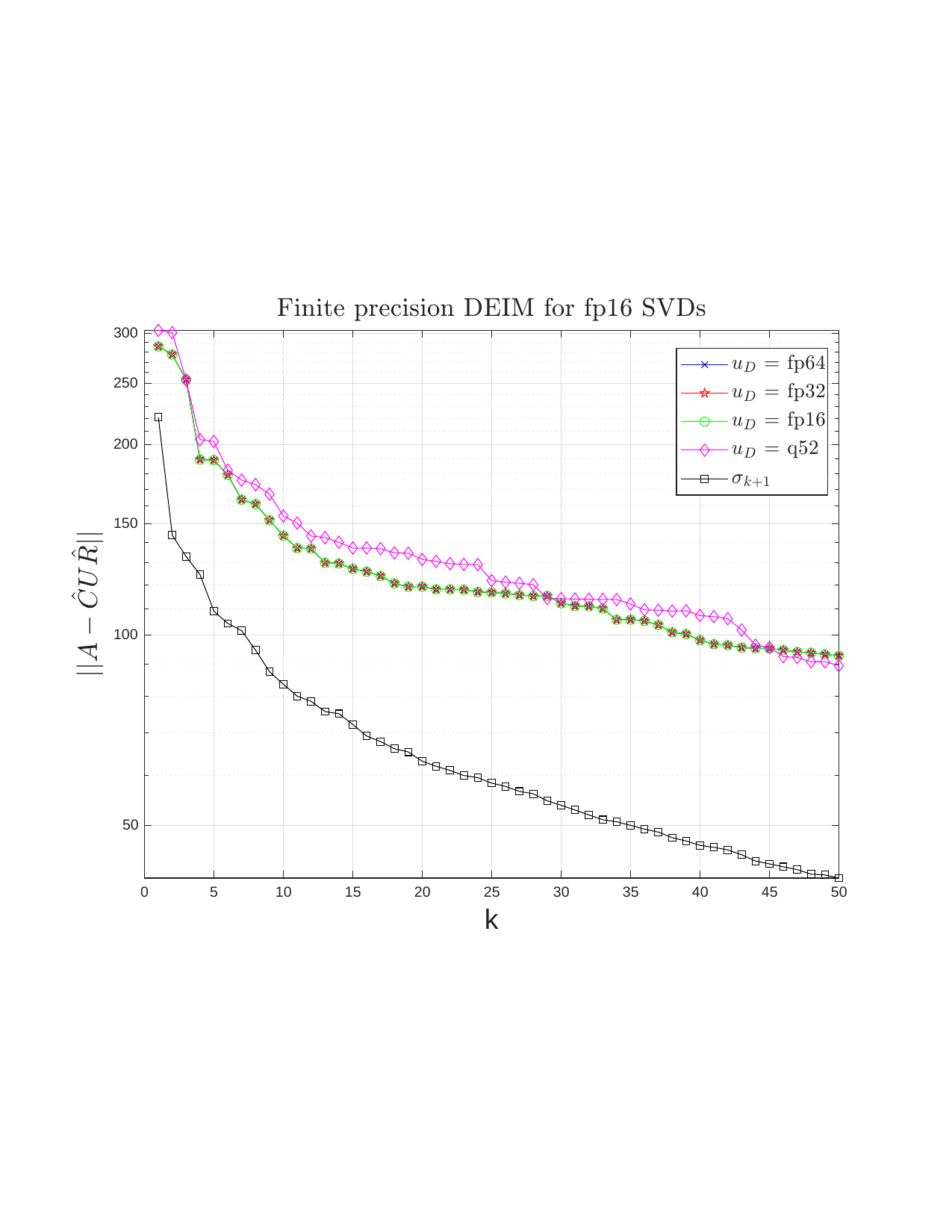} 
    \includegraphics[trim=1.5cm 6cm 1.5cm 6cm, clip,width=0.45\linewidth]{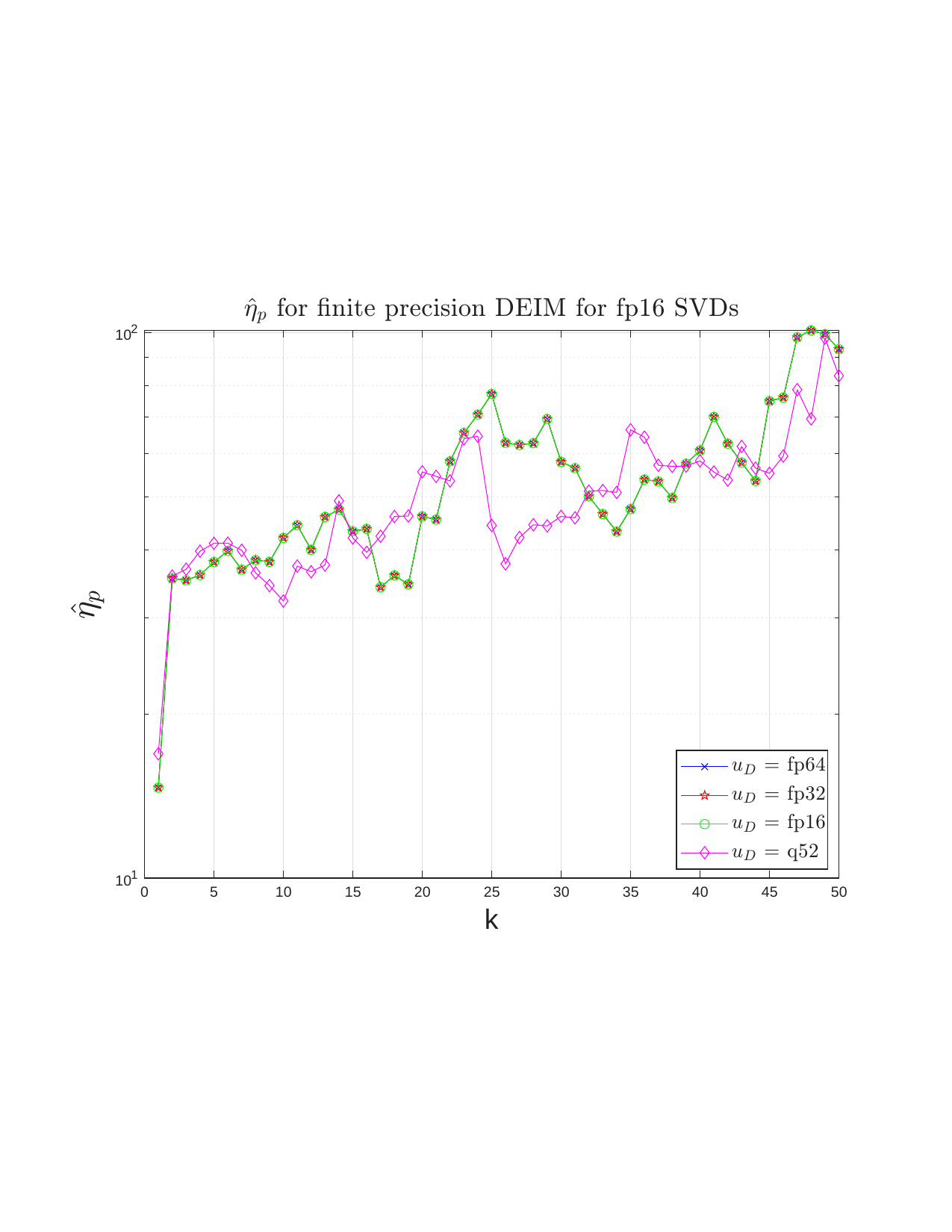} 
    \caption{For the matrix in Section \ref{section:exp2}, Error $\Vert A-\widehat{C}U\widehat{R}\Vert_2$ (left column)  and the value of the amplification factor $\widehat{\eta}_p$ (right column),  where the input SVD is computed using Golub-Kahan Lanczos bidiagonalization and for DEIM in various precisions.}
    \label{fig:ex3deim_mat2}
\end{figure}

\section{Conclusion}
\label{sec:conclusion}

In this work, we have investigated the effects of both the use of an inexact SVD and an inexact DEIM algorithm on the quality of DEIM-CUR low-rank approximations. 
We first prove a bound on the quality of a CUR factorization resulting from an inexactly computed SVD. This bound, analogous to the form proved in \cite{sorensen2016deim}, contains amplification factors which depend on the particular indices that are selected. We note that these bounds, presented in Section \ref{sec:inexactSVD}, are relevant to any of the index selection methods and therefore can be adapted to other index selection strategies. 

We then prove bounds on these amplification factors when indices are selected using a DEIM algorithm run in some precision $u_{D}$. Our bound \eqref{eq:uDEIM} suggests that in most practical cases, $u_{D}$ can be quite large (meaning that very low precision can be used) without significantly affecting the magnitude of the amplification factors for the computed indices. 

To demonstrate how the precision for the SVD, $u_S$, should be chosen, we take two examples of commonly-used SVD algorithms: LAPACK's \texttt{GESVD} and the randomized SVD with oversampling. In both cases, a clear heuristic emerges: the coarser the desired approximation (e.g., the lower the desired rank $k$), the lower precision we can likely use for the SVD computation; see \eqref{eq:usvdcond1} and \eqref{eq:usvdcond2}.  

Our numerical experiments demonstrate that there is very little difference in the resulting CUR approximation quality even when very low precisions are used for the SVD computation and the DEIM index selection. 

We note that here we have focused on the index selection part of the CUR approximation, where we have assumed both that the index selection is done by GEPP and that the computation of $U$ is performed exactly. While we suspect that it is necessary to compute $U$ precisely in order to maintain the quality of the approximation, quantifying the effects of error in the computation of $U$ and incorporating this into our analysis is an interesting further direction.

\bibliographystyle{alphadin}
\bibliography{paper}

\end{document}